\newtheorem{theorem}{Theorem}[section]
\newtheorem{proposition}[theorem]{Proposition}
\newtheorem{corollary}[theorem]{Corollary}
\newtheorem{definition}{Definition}[section]
\newtheorem{preexample}{Example}[section]
\newtheorem{preremark}{Remark}
\newenvironment{remark}{\begin{preremark}\rm}{\end{preremark}}
\newenvironment{proof}
  {{\bf Proof:}}
  {\qquad \hspace*{\fill} $\Box$}%
\newcommand{\fg}{\mathfrak{g}}%
\newcommand{\Ad}{\operatorname{Ad}}%
\newcommand{\ad}{\operatorname{ad}}%
\newcommand{\rme}{\mathrm{e}}%
\newcommand{\DC}{\mathcal{D}}%
\newcommand{\XC}{\mathcal{X}}%
\newcommand{\R}{\mathbb{R}}%
\newcommand{\C}{\mathbb{C}}%
\begin{document}

\title{Linear flows on compact, semisimple Lie groups: stability, periodic orbits, and Poincar\'e-Bendixon's Theorem}

\author{S. N. Stelmastchuk\\ Universidade Federal do Paran\'{a}\\Jandaia do Sul, Brazil
\footnote{{\bf AMS 2010 subject classification:} 22E46 , 34A05, 34D20, 37B99},
\footnote{{\bf Keywords:} stability, periodic orbits, linear flows, semisimple Lie groups}
\footnote{{\bf e-mail: }simnaos@gmail.com}
}

\maketitle

\begin{abstract}
	Our first purpose is to study the stability of linear flows on real, connected, compact, semisimple Lie groups. After, we study and classify periodic orbits of linear and invariant flows. In particular, we obtain a version of Poincar\'e-Bendixon's Theorem. As an application, we present periodic orbits of linear or invariant flows on $SO(3)$ or $SU(2)$, and we classify periodic orbits of a linear or invariant system on $SO(4)$.
\end{abstract}

\textbf{Keywords: } stability, periodic orbits, linear flow, semisimple Lie group.

\textbf{AMS 2010 subject classification}: 37C10,  37C75, 37C27, 22E46  

\section{Introduction}

Let $G$ be a real, connected Lie group. A vector field $\XC$ on $G$ is called {\it linear} if its flow, which is denoted by $\varphi_t$, is a family of automorphisms of $G$. In this work, we assume that $G$ is a semisimple Lie group. Our wish is to study some aspects of stability of linear flow $\varphi_t$ and periodic orbits of linear and invariant flows. 

Our first task is to study the stability in a fix point of linear flow $\varphi_t$. In natural way, we follow ideas presented in classical literature of dynamical systems on Euclidian space (see for instance \cite{colonius}, \cite{hirsch} and \cite{robinson}). In \cite{santana}, Da Silva, Santana and Stelmastchuk show that a necessary and sufficient condition to asymptotically and exponential stability of $\varphi_t$ at identity $e$ is that $\XC$ is hyperbolic. However, if a linear vector field $\XC$ is hyperbolic, then $G$ is a nilpotent Lie group. Because this obstruction, we choose restrict our study on compact, semisimple Lie groups. 

Let $G$ be a real, connected, compact, semisimple Lie group. Consider a linear vector field $\XC$ on $G$ and its linear flow $\varphi_t$. We show that any fix point of linear flow $\varphi_t$ is stable (see Theorem \ref{stable}). Furthermore, we demonstrate that any periodic orbit of linear flow $\varphi_t$ is stable (see Theorem \ref{periodic}). Also, we proof that the derivation $\DC = -ad(\XC)$ associated to $\XC$ has only semisimple eigenvalues since the identity $e$ is stable. The last fact is the key to study periodic orbits of linear flow $\varphi_t$.

Our next purpose is to study periodic orbits of linear flows on compact, semisimple Lie groups. In semisimple Lie groups, for any linear vector field $\XC$ there is a right invariant vector field $X$ associated to it. Thus, our first step is to study the relation between periodic orbits of linear flow $\varphi_t$ and invariant flow $\exp(tX)$. In fact, we proof that they are equivalent. Done this, we show that an orbit of linear or invariant system is periodic if and only if the derivation $\DC$ of $\XC$ has as eigenvalues $0$ and $\mu = \pm \alpha i$ for an unique $\alpha \in \R$ (see Theorem \ref{teo1}). As a direct consequence, every orbit that is not a fix point of a linear flow $\varphi_t$ or invariant flow $\exp(tX)$ on a 3-dimensional, compact, semisimple Lie group $G$ is periodic. Other consequence is a version of Poincar\'e-Bendixon Theorem for linear or invariant flows. We remark that for a linear or invariant flow all orbits that is not a fix point of linear or invariant flows are periodic or not.

To end, we presents periodic orbits of linear and invariant flows on $SO(3)$ and $SU(2)$, and we classify periodic orbits on $SO(4)$ (see Theorem \ref{periodicSO(4)}).

This paper is organized as follows. Section 2 briefly reviews the notions of linear vector fields. Section 3 works with stability on compact, semisimple Lie groups. Section 4 develops results about periodic orbits. Finally, section 5 applies previous results on compact, semisimple Lie groups $SO(3)$, $SU(2)$ and $SO(4)$. 

\section{Linear vector fields}

Let $G$ be a connected Lie group and let $\fg$ denote its Lie algebra. We call a vector field $\XC$ linear if its flow $(\varphi_t)_{t\in\R}$ are automorphisms of Lie group. It is known there is a derivation $\DC$ associated to $\XC$, which is given by
\[
  \DC(Y) = - [\XC,Y], \, Y \in \fg. 
\] 
In Euclidian case, it is true that $[Ax,b] = - Ab$. As a consequence, the derivation $\DC$ coincide with linear map induced by $A$; then, the dynamical system 
\[
  \dot{g} = \XC(g), \ \ g \in G,
\]
is a generalization of dynamical system on $\R^{n}$ given by
\[
  \dot{x} = Ax,
\]
where $A \in \R^{n \times n}$ and $x \in \R^n$. 

Da Silva, in \cite{dasilva}, write 
\[
  \fg^+=\bigoplus_{\alpha; \mathrm{Re}(\alpha)>0}\fg_{\alpha}, \;\;\;\;\fg^0=\bigoplus_{\alpha; \mathrm{Re}(\alpha)=0}\fg_{\alpha}, \;\;\;\mbox{ and }\;\;\;\fg^-=\bigoplus_{\alpha; \mathrm{Re}(\alpha)<0}\fg_{\alpha}
\]
where $\alpha$ are eigenvalues of the derivation $\DC$ such that 
\[ 
  \fg =\fg^+\oplus \fg^{0} \oplus \fg^{-} \ \ \mbox{and} \ \ [\fg_{\alpha},\fg_{\beta}] = \fg_{\alpha + \beta}
\]
with $\alpha + \beta = 0$ if the sum is not an eigenvalue. Let us denote by $G^+$, $G^0$ and $G^-$  $\varphi_t$-invariant, connected Lie subgroups of Lie algebras $\fg^+$, $\fg^0$ and $\fg^-$, respectively. Lie subgroups $G^+$, $G^0$ and $G^-$ are called unstable, central and stable groups associated to $\varphi_t$, respectively. 

For the convenience of the reader we resume some facts about a linear vector field $\XC$ and its flow $\varphi_t$. The proof of these facts can be found in \cite{cardetti}. 
\begin{proposition}\label{linearproperties}
  Let $\XC$ be a linear vector field and let $\varphi_t$ denote its flow. The following assertions hold:
  \begin{description}
    \item{(i)} $\varphi_t$ is an automorphism of Lie groups for each $t$;
	  \item{(ii)} $\XC$ is linear iff $\XC(gh) = R_{h*}\XC(g) + L_{g*}\XC(h)$; 
  	\item{(iii)} $(d\varphi_t)_e = e^{t\DC}$ for all $t \in \R$.
	\end{description}
\end{proposition}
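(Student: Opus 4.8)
The plan is to prove (ii) first, since it is the analytic core of the proposition, and then to obtain (i) and (iii) from it by standard arguments on integral curves and one-parameter subgroups. Throughout write $m\colon G\times G\to G$ for the multiplication map, and recall the product rule $dm_{(g,h)}(u,v)=R_{h*}u+L_{g*}v$ for $u\in T_gG$, $v\in T_hG$.

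For the forward implication of (ii): if $\XC$ is linear then each $\varphi_t$ is a homomorphism, i.e. $\varphi_t\circ m=m\circ(\varphi_t\times\varphi_t)$; differentiating this identity in $t$ at $t=0$ at the point $(g,h)$ and applying the product rule on the right-hand side gives $\XC(gh)=R_{h*}\XC(g)+L_{g*}\XC(h)$. For the converse, first put $g=h=e$ in this identity to get $\XC(e)=2\XC(e)$, hence $\XC(e)=0$ and $\varphi_t(e)=e$ for all $t$ (so the identity is always a fixed point of a linear flow). Now fix $g,h\in G$ and compare the curves $c(t)=\varphi_t(gh)$ and $d(t)=\varphi_t(g)\varphi_t(h)$: both satisfy $c(0)=d(0)=gh$, the first solves $\dot c=\XC(c)$ by definition, and for the second the product rule together with the Leibniz identity gives $\dot d(t)=R_{\varphi_t(h)*}\XC(\varphi_t(g))+L_{\varphi_t(g)*}\XC(\varphi_t(h))=\XC(\varphi_t(g)\varphi_t(h))=\XC(d(t))$. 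By uniqueness of integral curves $c\equiv d$, so each $\varphi_t$ is an endomorphism; since $\varphi_{-t}$ is a two-sided inverse, it is in fact an automorphism, which is (i) (modulo completeness of $\varphi_t$, discussed below).

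For (iii), set $\psi(t)=(d\varphi_t)_e\in GL(\fg)$, which is defined because $e$ is a fixed point and is a genuine Lie algebra automorphism since $\varphi_t$ is. The cocycle identity $\varphi_{t+s}=\varphi_t\circ\varphi_s$ and the chain rule, again using $\varphi_s(e)=e$, yield $\psi(t+s)=\psi(t)\psi(s)$, so $\psi$ is a one-parameter subgroup of $GL(\fg)$ and therefore $\psi(t)=e^{tB}$ with $B=\psi'(0)$. It remains to identify $B$ with $\DC$: evaluating the Lie-derivative formula $[\XC,Y]_p=\tfrac{d}{dt}\big|_0\,d(\varphi_{-t})_{\varphi_t(p)}\big(Y_{\varphi_t(p)}\big)$ at $p=e$ and using $\varphi_t(e)=e$ gives $[\XC,Y]_e=-\psi'(0)Y=-B(Y)$, so $B(Y)=-[\XC,Y]_e=\DC(Y)$ as elements of $\fg$ (recall $[\XC,Y]$ is again an invariant vector field when $\XC$ is linear, which is exactly what makes $\DC$ well defined on $\fg$). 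Hence $(d\varphi_t)_e=e^{t\DC}$.

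The step I expect to be the main obstacle is not any of the differentiations above but the implicit completeness statement in (i): that the flow of a linear vector field on an \emph{arbitrary} connected Lie group is defined for all $t\in\R$. On a compact $G$ this is automatic, and in general one argues that $\XC$ generates a one-parameter subgroup of the Lie group $\operatorname{Aut}(G)$, whose one-parameter subgroups are complete; some care is also needed to make the assertion "$[\XC,Y]\in\fg$" precise in step (iii). Everything else reduces to the product rule for $m$, uniqueness of ODE solutions, and the classification of one-parameter subgroups of $GL(\fg)$.
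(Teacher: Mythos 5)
The paper does not actually prove this proposition --- it defers entirely to the reference \cite{cardetti} --- so there is no in-text argument to compare yours against. Your proof is correct and is essentially the standard one found in that literature: the forward direction of (ii) by differentiating $\varphi_t\circ m=m\circ(\varphi_t\times\varphi_t)$ at $t=0$, the converse by the uniqueness-of-integral-curves comparison of $\varphi_t(gh)$ with $\varphi_t(g)\varphi_t(h)$ (after first extracting $\XC(e)=0$ from the identity at $g=h=e$), and (iii) by recognizing $(d\varphi_t)_e$ as a one-parameter subgroup of $GL(\fg)$ whose generator is identified with $\DC$ via the Lie-derivative formula at the fixed point $e$. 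Note that in this paper (i) is literally the \emph{definition} of linearity, so the real content is the converse of (ii) and statement (iii), both of which you handle properly. The one genuine gap is the one you flag yourself: completeness of the flow on a noncompact $G$. It is fillable and not circular --- your homomorphism identity holds on the open set where both sides are defined, and since $\XC(e)=0$ the flow exists for a uniform time $\epsilon$ on a neighborhood $U$ of $e$; writing the connected group as $G=\bigcup_n U^n$ and using the local homomorphism property extends the domain to all of $G$ for $|t|<\epsilon$, and then the group law in $t$ gives all of $\R$ --- but as written your proof of (i) is conditional on it, so you should either include that argument or cite it explicitly.
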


\section{Stability of the linear flow}
  

Let $G$ be a semisimple Lie group and $\XC$ a linear vector field on $G$. In this section, our wish is to study stability of the linear flow $\varphi_t$ that is the solution of the differential equation on $G$ given by  
\begin{equation}\label{odelinear}
  \dot{g} = \XC(g).
\end{equation}

Being $G$ semisimple, there is a right invariant vector field $X$ such that $\XC = X + I_*X$, where $I_{*}X$ is the left invariant vector field associated to $X$ and  $I_*$ is the differential of inverse map $\mathfrak{i}(g) = g^{-1}$ (more details is founded in \cite{sanmartin}). From this it follows that the linear flow can be written as   
\[
  \varphi_t(g) = \exp(tX).g.\exp(t (I_*X)), \, \forall \, g \in G.
\]
According to above expression, we have that the identity $e$ is a fix point for the linear flow. However, it may to exist other fix point.

\begin{proposition}
  If $g$ is a point belongs to center of $G$, then $g$ is a fix point of the linear flow $\varphi_t$.
\end{proposition}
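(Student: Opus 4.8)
The plan is to use the explicit formula for the linear flow,
\[
  \varphi_t(g) = \exp(tX)\cdot g \cdot \exp(t(I_*X)),
\]
together with the fact that the left invariant field $I_*X$ and the right invariant field $X$ agree on the identity, so that $\exp(t(I_*X)) = \exp(tX)^{-1}$ as one-parameter subgroups (both are the integral curve through $e$ of a field whose value at $e$ is $X(e)$, and $I_*$ sends $X$ to a left invariant field with the same value at $e$ up to sign; more precisely $\exp(t I_*X) = \mathfrak{i}(\exp(tX)) = \exp(tX)^{-1}$). Hence $\varphi_t(g) = \exp(tX)\, g \, \exp(tX)^{-1}$, i.e. $\varphi_t$ acts by conjugation by $\exp(tX)$.

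First I would record this reduction: for semisimple $G$, the linear flow is exactly $\varphi_t = \mathrm{conj}_{\exp(tX)}$. Second, let $g$ lie in the center $Z(G)$. By definition of the center, $h g h^{-1} = g$ for every $h \in G$; in particular this holds for $h = \exp(tX)$ for every $t \in \R$. Third, conclude $\varphi_t(g) = \exp(tX)\, g\, \exp(tX)^{-1} = g$ for all $t$, so $g$ is a fixed point. That is the entire argument.

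The only step that requires a small amount of care — and the one I expect to be the main (mild) obstacle — is justifying $\exp(t(I_*X)) = \exp(tX)^{-1}$, i.e. that the left invariant vector field $I_*X$ integrates to the inverses of the one-parameter subgroup generated by $X$. This follows because $\mathfrak{i}\colon G \to G$ is a diffeomorphism with $d\mathfrak{i}_e = -\id$, so it carries the integral curve $t \mapsto \exp(tX)$ of the right invariant field $X$ to an integral curve of the field $\mathfrak{i}_* X = I_* X$ through $e$, namely $t \mapsto \exp(tX)^{-1}$; uniqueness of integral curves through $e$ identifies this with $\exp(t(I_*X))$. Alternatively one can sidestep this entirely: since the proposition only asserts that $g \in Z(G)$ is fixed, it suffices to note that $\varphi_t$ is an automorphism of $G$ (Proposition \ref{linearproperties}(i)), automorphisms preserve the center $Z(G)$, and in fact the restriction of any inner-type flow to the center is trivial — but the cleanest route is the conjugation formula above. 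Everything else (commuting with central elements, taking $t$ arbitrary) is immediate from the definition of $Z(G)$.
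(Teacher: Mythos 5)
Your proof is correct and follows essentially the same route as the paper: the paper likewise writes $\varphi_t(g) = \exp(tX)\cdot g\cdot\exp(-tX)$ and commutes the central element $g$ past $\exp(-tX)$. The only difference is that you explicitly justify the identity $\exp(t(I_*X)) = \exp(tX)^{-1}$, which the paper takes for granted.
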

\begin{proof}
  Let $g$ be a point in the center of $G$. Then, for all $t \in \R$,
	\[
	  \varphi_t(g) = \exp(tX)\cdot g \cdot \exp(-tX) =  \exp(tX) \cdot \exp(-tX) \cdot g = g,
	\]
	which is the desired conclusion.
\end{proof}

Our next step is to presents the hyperbolic concept to linear vector fields. We remember that the stability in Euclidian space is obtained if a dynamical system is hyperbolic (see for instance \cite{robinson}). As one can see in \cite{santana}, it is also true if $\XC$ is hyperbolic on a Lie group $G$. 

\begin{definition}
  Let $\mathcal{X}$ be a linear vector field on a Lie group $G$. We call $\XC$ {\bf hyperbolic} if its associated derivation $\DC$ is hyperbolic, that is, $\DC$ has no eigenvalues with zero real part.
\end{definition}

Let $\XC$ be a hyperbolic linear vector field on a semisimple Lie group $G$. Then $\DC$ has no eigenvalues with zero real part. Denoting by $\fg_{\alpha}$ the generalized eigenspace associated with an eigenvalue $\alpha$ of $\DC$ we get   
\[
  [\fg_{\alpha}, \fg_{\beta}]\subset\fg_{\alpha+\beta},
\]
where $\alpha+\beta$ is an eigenvalue of $\DC$ and zero otherwise (see for instance Proposition 3.1 in \cite{sanmartin}). Since $\dim G<\infty$, it implies that the Lie algebra $\fg$ is nilpotent. In consequence, $G$ is nilpotent. We have thus proved

\begin{proposition}\label{noexistence}
  There no exists hyperbolic linear vector field on semisimple Lie groups. 
\end{proposition}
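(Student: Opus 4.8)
The plan is to reach a contradiction from the assumption that a hyperbolic linear vector field $\XC$ exists on a semisimple Lie group $G$, arguing entirely on the Lie algebra $\fg$ through the associated derivation $\DC$ (recall $\DC(Y)=-[\XC,Y]$, which is genuinely a derivation of $\fg$ because $\XC$ is linear). The shortest route avoids nilpotency altogether: I would use the classical fact that on a semisimple Lie algebra every derivation is inner (Whitehead's lemma, i.e. $H^{1}(\fg,\fg)=0$), so that $\DC=\ad(Z)$ for some $Z\in\fg$. Then $\DC(Z)=[Z,Z]=0$, hence $Z\in\ker\DC$; if instead $Z=0$ then $\DC\equiv 0$. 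Either way $0$ is an eigenvalue of $\DC$, and $0$ has zero real part, so $\DC$ — equivalently $\XC$ — cannot be hyperbolic. Since semisimple Lie algebras are by definition nonzero, this contradiction proves the claim.

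So the steps, in order, would be: (i) pass from the flow $\varphi_{t}$ to the derivation $\DC$ of $\fg$ and record that $\DC$ is a derivation; (ii) invoke semisimplicity to write $\DC=\ad(Z)$ for some $Z\in\fg$; (iii) note $Z\in\ker\DC$, so $0$ is an eigenvalue of $\DC$; (iv) conclude that $\DC$ has an eigenvalue with zero real part, contradicting hyperbolicity. None of these steps carries any computation; the whole proof reduces to quoting the right structural fact about semisimple Lie algebras.

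For completeness I would also mention the alternative argument sketched just before the statement, which runs: a hyperbolic $\DC$ has $0$ outside its spectrum, the generalized-eigenspace decomposition satisfies $[\fg_{\alpha},\fg_{\beta}]\subseteq\fg_{\alpha+\beta}$, and from this $\fg$ must be nilpotent — but a semisimple Lie algebra is never nilpotent. The delicate point on that route, and the step I would flag as the real obstacle, is the implication that a grading whose index set omits $0$ forces $\fg$ to be nilpotent: the naive reasoning (``real parts of iterated brackets grow without bound while the spectrum is finite'') breaks down as soon as $0$ lies in the convex hull of the spectrum, and one genuinely needs Jacobson's theorem (a finite-dimensional characteristic-zero Lie algebra admitting a nonsingular derivation is nilpotent), whose proof blends the grading with the Jacobi identity and Engel's theorem. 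Because of this I would keep the first route as the actual proof and treat the nilpotency statement as a corollary.
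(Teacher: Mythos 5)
Your proof is correct, but it takes a genuinely different route from the paper's. The paper argues via nilpotency: if $\DC$ is hyperbolic then it is nonsingular, the generalized eigenspace decomposition satisfies $[\fg_{\alpha},\fg_{\beta}]\subset\fg_{\alpha+\beta}$, and from this the paper asserts that $\fg$ is nilpotent, contradicting semisimplicity. You instead use Whitehead's lemma: on a semisimple $\fg$ the derivation $\DC$ is inner, say $\DC=\ad(Z)$, so $\DC(Z)=[Z,Z]=0$ and $0$ lies in the spectrum of $\DC$ (handling $Z=0$ separately), which already kills hyperbolicity. Your route is shorter, and it leans on a fact the paper itself invokes two paragraphs earlier when it writes $\XC=X+I_{*}X$ for a right-invariant $X$ — that decomposition is precisely the inner-ness of $\DC$ — so you are not importing anything foreign to the paper's framework. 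You are also right to flag the paper's step ``since $\dim G<\infty$, the Lie algebra $\fg$ is nilpotent'' as the delicate point: the naive growth-of-real-parts argument fails when real parts of eigenvalues can cancel, and what is really being used there is Jacobson's theorem that a finite-dimensional Lie algebra in characteristic zero admitting a nonsingular derivation is nilpotent (the paper does not cite it). What the paper's longer route buys is the stronger structural remark, used in the introduction, that hyperbolicity of a linear vector field forces the group to be nilpotent; your route proves exactly the stated proposition and nothing more, but does so with no gaps.
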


We now begin the study of the stability of  linear flows on semisimple Lie groups. Firstly, we remember some concepts of stability. 
\begin{definition}
  Let $g\in G$ be a fixed point of the linear vector field $\mathcal{X}$. We call $g$ 
  \begin{itemize}
    \item[1)]{\bf stable} if for all $g$-neighborhood $U$ there is a $g$-neighborhood $V$ such that $\varphi_t(V)\subset U$ for all $t\geq 0$;
    \item[2)] {\bf asymptotically stable} if it is stable and there exists a $g$-neighborhood $W$ such that $\lim_{t\rightarrow\infty}\varphi_t(x)=g$ whenever $x\in W$;
    \item[3)] {\bf exponentially stable} if there exist $c, \mu$ and a $g$-neighborhood $W$ such that for all $x\in W$ it holds that
    $$\varrho(\varphi_t(x), g)\leq c\rme^{-\mu t}\varrho(x, g),\;\;\;\;\mbox{ for all }\;\;t\geq 0;$$
    \item[4)] {\bf unstable} if it is not stable.
  \end{itemize}
\end{definition}

Since property 3) is local, it does not depend of the metric on $G$. Because of this, we will assume from now on that $\varrho$ is a left invariant Riemmanian metric.

In order to characterize the stability, let us work with the Lyapunov exponent. We follow \cite{santana} in assuming that the Lyapunov exponent can be written as  
\[
  \lambda(e,v) = \limsup_{t \to \infty} \frac{1}{t} \rm log(\|e^{tD}(v)\|),
\]
where $v$ is $\fg$ and the norm $\| \cdot \|$ is given by the left invariant metric.

We will use $\lambda_1, \ldots, \lambda_k$ to denote $k$ distinct values of the real parts eigenvalues of the derivation $\DC$. Then, the Lie algebra $\fg$ can be written as 
\[
  \fg=\bigoplus_{i=1}^k\fg_{\lambda_i} \;\;\mbox{ where }\;\;\fg_{\lambda_i}:=\bigoplus_{\alpha; \mathrm{Re}(\alpha)=\lambda_i}\fg_{\alpha}.
\]
Furthermore, from Theorem 4.2 in \cite{santana} we see that 
\begin{equation}\label{lyapunovroots}
  \lambda(e, v)= \lambda \;\;\;\Leftrightarrow\;\;\;v\in\fg_{\lambda}:=\bigoplus_{\alpha; \mathrm{Re}(\alpha)=\lambda}\fg_{\alpha}.
\end{equation}

Using the Lyapunov exponent we make a first result about stability of the linear flow $\varphi_t$.

\begin{theorem}
  For any linear vector field $\XC$ on a semisimple Lie group $G$, any fix point is neither asymptotically nor exponentially stable to the linear flow $\varphi_t$. 
\end{theorem}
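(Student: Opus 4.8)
The plan is to derive a contradiction from either form of attracting behaviour by exploiting the semisimplicity of $G$ together with the decomposition $\XC = X + I_*X$, which makes the linear flow a conjugation-type flow $\varphi_t(g) = \exp(tX)\, g \, \exp(-tX)$. The crucial structural input is that $G$ is semisimple, so its Lie algebra $\fg$ has trivial center; consequently the derivation $\DC = -\ad(\XC)$ is essentially $-\ad_{\fg}(X)$ acting on $\fg$, and $\DC$ cannot be invertible: by Proposition \ref{noexistence} it has no eigenvalue with zero real part would force $G$ nilpotent, contradicting semisimplicity. Hence $\DC$ has at least one eigenvalue with zero real part, i.e.\ there is $0 \ne v \in \fg$ with $\lambda(e,v) = 0$ via \eqref{lyapunovroots}.

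First I would recall the two things an attracting fixed point $g$ would demand. For asymptotic stability one needs a neighbourhood $W$ of $g$ with $\varphi_t(x) \to g$ for all $x \in W$; by the chain rule and Proposition \ref{linearproperties}(iii), linearizing at the fixed point $e$ gives $(d\varphi_t)_e = e^{t\DC}$, and a standard argument (the flow is differentiable, so local attraction forces $e^{t\DC}(v) \to 0$ for every tangent vector $v$) shows all eigenvalues of $\DC$ must have strictly negative real part. For a general fixed point $g$ in the center one can conjugate or directly use that $\varphi_t$ is an automorphism fixing $g$, so the differential at $g$ is again conjugate to $e^{t\DC}$; the spectral condition is the same. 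But $\DC$ has a zero-real-part eigenvalue, so $e^{t\DC}$ does not contract on that eigenspace — in fact on $\fg^0$ the norm $\|e^{t\DC}(v)\|$ stays bounded below (its Lyapunov exponent is $0$), contradicting attraction. This kills asymptotic stability, and a fortiori exponential stability, since exponential stability implies asymptotic stability.

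A cleaner route that avoids linearization subtleties is to argue directly on $G$ using \eqref{lyapunovroots}: pick $v \in \fg$ with $\lambda(e,v) = 0$, let $c(s) = \exp(sv)$ be the corresponding one-parameter subgroup, and track $\varphi_t(c(s)) = \exp(tX)\exp(sv)\exp(-tX) = \exp\big(s\,\Ad(\exp tX)v\big) = \exp\big(s\, e^{-t\ad X} v\big)$. Since $\lambda(e,v)=0$, the curve $t \mapsto e^{-t\ad X}v = e^{t\DC}v$ does not tend to $0$ and stays at bounded-away-from-zero distance infinitely often (indeed on $\fg^0$ the flow is, after choosing coordinates, a rotation-type flow with constant norm in the compact case), so for small fixed $s>0$ the point $\varphi_t(c(s))$ does not converge to $e$. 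Restricting $s$ so that $c(s)$ lies in any prescribed neighbourhood $W$ of $e$ shows no such $W$ can be an attracting neighbourhood; the same conjugation argument transplanted by $g$ handles an arbitrary fixed point. Exponential stability is then excluded because \eqref{lyapunovroots} would force $\lambda(e,v) \le -\mu < 0$ for all $v \ne 0$, i.e.\ $\DC$ hyperbolic with negative spectrum, contradicting Proposition \ref{noexistence}.

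The main obstacle I anticipate is the passage from the behaviour of the \emph{linearization} $e^{t\DC}$ to genuine non-convergence of $\varphi_t$ on $G$ itself near the fixed point: one must be careful that a zero Lyapunov exponent really obstructs convergence rather than merely failing to guarantee it. Using the explicit conjugation formula $\varphi_t(\exp sv) = \exp(s\,e^{t\DC}v)$ sidesteps this, because it is an exact identity, not an approximation — so the heart of the write-up is to justify that the $\fg^0$-component of $e^{t\DC}v$ genuinely fails to decay (here compactness of $G$, giving a bi-invariant metric and forcing $\ad X$ to be skew-symmetric, makes $\|e^{t\DC}v\|$ literally constant on $\fg^0$), and to note that $\exp$ is a local diffeomorphism at $0$ so that $\exp(s\,e^{t\DC}v) \not\to e$.
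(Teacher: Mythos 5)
Your argument is correct and follows the same skeleton as the paper's proof: reduce to the identity, show that asymptotic (hence exponential) stability would force every eigenvalue of $\DC$ to have negative real part, conclude that $\XC$ would be hyperbolic, and contradict Proposition \ref{noexistence}. The one genuine difference is the middle step: the paper outsources ``attraction at $e$ implies all Lyapunov exponents negative'' to Theorem 4.5 of \cite{santana} together with \eqref{lyapunovroots}, whereas you prove it directly from the exact identity $\varphi_t(\exp(sv))=\exp\bigl(s\,e^{t\ad(X)}v\bigr)$. That extra care is not cosmetic: your first formulation --- ``the flow is differentiable, so local attraction forces $e^{t\DC}v\to 0$ for every tangent vector'' --- is false for general flows (for $\dot x=-x^3$ the origin is attracting with zero linearization), and it is precisely the exactness of the conjugation formula, which you invoke in your ``cleaner route,'' that rescues the implication; so keep that version and drop the first. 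Two small cautions. First, the theorem is stated for an arbitrary semisimple $G$, so the parenthetical appeal to compactness (bi-invariant metric, $\|e^{t\DC}v\|$ constant) is not available here; instead take $v$ a genuine eigenvector for an eigenvalue with zero real part (one exists by Proposition \ref{noexistence}), on whose invariant real plane $e^{t\DC}$ acts as a rotation, hence stays bounded and bounded away from $0$, and then $\exp(s\,e^{t\DC}v)$ stays outside a fixed neighborhood of $e$ for small $s\neq 0$ because $\exp$ is a diffeomorphism near $0$. Second, for a general fixed point $g$ your transplant works because $\varphi_t\circ L_g=L_g\circ\varphi_t$ whenever $\varphi_t(g)=g$, which is the same reduction the paper performs via $\lambda(g,v)=\lambda(e,v)$.
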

\begin{proof}
	We first observe that Lyapunov exponents satisfy the following: $\lambda(g,v) = \lambda(e,v)$ for each $v \in \fg$. We need only consider the assertion at identity $e$. Suppose, contrary to our claim, that the identity $e$ is either asymptotically or exponentially stable. By Theorem 4.5 in \cite{santana}, it follows that all Lyapunov exponents of $\DC$ are negatives. From (\ref{lyapunovroots}) it follows that any eigenvalue of $\DC$ has the real part negative. It means that $\XC$ is hyperbolic, and this contradicts Proposition \ref{noexistence}.
\end{proof}

Despite any fix point is neither asymptotically nor exponentially stable, they are stable if $G$ is compact and semisimple as we will show. For this purpose, we begin by introducing an appropriate metric on $G$. 

Let $G$ be a compact and semisimple. Then, we know that the Cartan-Killing form is negative defined. Thus, we adopt the metric $<,>$ given by negative of the Cartan-Killing form on $\fg$. Since $<,>$ satisfies 
\[
  <\Ad(g)X,\Ad(g)Y> = <X,Y>, \,\, \forall \, g \in G \,\, \mbox{and}\,\, X,Y \in \fg,
\]
it follows that $<,>$ is an invariant Riemmanian metric on $G$ associated to Cartan-Killing form (see \cite{arvani} for more details). From now on we make the assumption: every compact, semisimple Lie group is equipped with the Riemannian metric given by Cartan-Killing form. 

Adopting these invariant metric and using Lyapunov exponent leads us a algebraic characterization of linear vector fields on compact, semisimple Lie groups, namely, its eigenvalues have real part null. 

\begin{proposition}\label{central}
  Let $\XC$ be a linear vector field on a compact, semisimple Lie group $G$. Then, $G$ is the central group of the linear flow $\varphi_t$.
\end{proposition}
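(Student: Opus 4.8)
The plan is to prove the equivalent algebraic statement that every eigenvalue $\alpha$ of the derivation $\DC$ satisfies $\mathrm{Re}(\alpha)=0$; by the definition of $\fg^0$ this yields $\fg^0=\fg$, and since $G$ is connected the connected subgroup integrating $\fg^0=\fg$ is $G$ itself, so $G$ is the central group of $\varphi_t$.

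First I would exploit the rigidity coming from the Cartan--Killing metric. By Proposition \ref{linearproperties}, each $\varphi_t$ is an automorphism of $G$, hence $(d\varphi_t)_e = e^{t\DC}$ is an automorphism of the Lie algebra $\fg$. Any Lie algebra automorphism $\phi$ obeys $\ad(\phi Y)=\phi\circ\ad(Y)\circ\phi^{-1}$, and therefore leaves the Cartan--Killing form $B(Y,Z)=\tr(\ad Y\,\ad Z)$ invariant; consequently it preserves the metric $\langle\cdot,\cdot\rangle=-B$. Thus $e^{t\DC}$ is an orthogonal transformation of $(\fg,\langle\cdot,\cdot\rangle)$ for every $t\in\R$, and in particular $\|e^{t\DC}(v)\|=\|v\|$ for all $t\in\R$ and all $v\in\fg$.

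Next I would feed this into the Lyapunov exponent. From the displayed formula for $\lambda(e,v)$, for every $v\neq 0$ we obtain
\[
  \lambda(e,v)=\limsup_{t\to\infty}\tfrac{1}{t}\log\|e^{t\DC}(v)\|=\limsup_{t\to\infty}\tfrac{1}{t}\log\|v\|=0 .
\]
By the characterization (\ref{lyapunovroots}) this forces $v\in\fg_0=\bigoplus_{\alpha;\,\mathrm{Re}(\alpha)=0}\fg_\alpha=\fg^0$, and since $v$ was arbitrary, $\fg=\fg^0$. (Equivalently, and without invoking the Lyapunov machinery, one notes that a one-parameter subgroup $t\mapsto e^{t\DC}$ with values in the orthogonal group is bounded, so $\DC$ can have no eigenvalue with nonzero real part.) Hence $\fg^0=\fg$ and $G^0=G$.

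I do not expect a serious obstacle here: the whole argument rests on the single fact that automorphisms of a compact, semisimple Lie algebra are isometries of the Cartan--Killing metric, which then trivializes the growth of $e^{t\DC}$. The only point requiring a little care is the translation between the geometric phrasing ``$G$ is the central group'' and the algebraic conclusion $\fg^0=\fg$, together with the remark that inside the connected group $G$ the connected subgroup with Lie algebra $\fg$ is $G$ itself.
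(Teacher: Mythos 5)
Your proposal is correct and follows essentially the same route as the paper: both reduce the claim to showing $\|e^{t\DC}v\|=\|v\|$ in the Cartan--Killing metric and then conclude via vanishing Lyapunov exponents and the characterization (\ref{lyapunovroots}). The only (immaterial) difference is how the isometry of $e^{t\DC}$ is justified --- the paper uses $\DC=\ad(X)$ and the $\Ad$-invariance $e^{t\ad(X)}=\Ad(\exp(tX))$, while you invoke the invariance of the Killing form under arbitrary Lie algebra automorphisms.
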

\begin{proof}
  We begin by writing $\XC = X + I_*(X)$ with $X \in \fg$. It is clear that $\DC = \ad(X)$. Then, for any $v \in \fg$ we have 
	\[
	  \|e^{t\DC}v\| = \|e^{t\, \ad(X)}v\| = \|\Ad (\exp(tX))v\|= \|v\|,
	\]
	where we used the $\Ad$-invariance of metric at last equality. Thus, Lyapunov exponents can be written as 
	\[
	  \lambda(e,v) = \limsup_{t \to \infty} \frac{1}{t} \rm log(\|e^{t\DC}v\|) = \limsup_{t \to \infty} \frac{1}{t} \rm log(\|v\|) = 0.
	\]
	Therefore, $\lambda_1= \ldots= \lambda_k = 0$. Using the relation (\ref{lyapunovroots}) we conclude that $\fg =\fg_0$. Since $G$ is connected, $G= G_0$. It means that $G$ is the central group associated to linear flow $\varphi_t$.
\end{proof}

Despite Proposition above is presented in \cite{dasilva}, we want to prove this because our proof is done by dynamical concepts instead of algebraic concepts, as done there.

Our next step is to verify if the linear flow $\varphi_t$ satisfies some metric property. To do this, we recall what is a Riemmanian distance. Let $(M,g)$ be a Riemmanian manifold, a Riemmanian distance is $\rho$ associated to $g$ is defined by 
\[
  \rho(x,y) = \displaystyle\inf_{\sigma}\{\int_0^1 g( \dot{\sigma}(s), \dot{\sigma}(s))^{1/2} ds\},
\]
where the infimun is taken over all smooth curves $\sigma$ such that $\sigma(0) = x$ and $\sigma(1) = y$. 

\begin{proposition}\label{isometry}
  Let $\XC$ be a linear vector field on a compact, semisimple Lie group $G$. Then, $\varphi_t$ is an isometry for all $t$.
\end{proposition}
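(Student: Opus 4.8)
The plan is to recognize $\varphi_t$ as conjugation by a one-parameter subgroup and then exploit the bi-invariance of the Cartan-Killing metric. First I would recall, from the opening of this section, that since $G$ is semisimple we may write $\XC = X + I_*X$ for some $X \in \fg$, so that
\[
  \varphi_t(g) = \exp(tX) \cdot g \cdot \exp(-tX), \qquad g \in G;
\]
that is, $\varphi_t = C_{a_t}$ is conjugation by $a_t := \exp(tX)$, where $C_a(g) = aga^{-1}$. Since $C_a = L_a \circ R_{a^{-1}}$, it is enough to check that all left translations and all right translations are isometries of the metric $<,>$ coming from the negative of the Cartan-Killing form.

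Left translations are isometries by the very choice of $<,>$ as an invariant metric. For the right translations, I would invoke the standard principle that a left invariant metric on a Lie group is automatically right invariant as soon as it is $\Ad$-invariant: from $R_g \circ L_h = L_{hg} \circ C_{g^{-1}}$ one gets, after differentiating at $e$, that in the trivialization of $TG$ by left translations the map $(dR_g)_h$ acts as $\Ad(g^{-1})$, and the identity $<\Ad(g)Y,\Ad(g)Z> = <Y,Z>$ recorded above is exactly the statement that $\Ad(g^{-1})$ is a linear isometry of $(\fg, <,>)$. Hence every right translation, and therefore $\varphi_t = L_{a_t} \circ R_{a_t^{-1}}$, is an isometry of $G$ for every $t \in \R$.

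An alternative route, more in the spirit of the previous two propositions, would be to compute the differential directly: by Proposition \ref{linearproperties}(iii) together with $\DC = \ad(X)$ we have $(d\varphi_t)_e = e^{t\DC} = \Ad(\exp(tX))$, which is a linear isometry of $\fg$ by the computation carried out in Proposition \ref{central}; then, differentiating the automorphism identity $\varphi_t \circ L_g = L_{\varphi_t(g)} \circ \varphi_t$ at $e$ yields $(d\varphi_t)_g = (dL_{\varphi_t(g)})_e \circ (d\varphi_t)_e \circ (dL_g)_e^{-1}$, a composition of isometries. Either way there is no genuine obstacle in this proof; the only point requiring a little care is the (standard) passage from $\Ad$-invariance of the form on $\fg$ to actual bi-invariance of the Riemannian metric on $G$, which is precisely what the displayed $\Ad$-invariance identity furnishes.
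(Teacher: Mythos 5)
Your proposal is correct, and it actually supplies the step that the paper's own proof leaves implicit. The paper argues at the level of distances: it pulls an arbitrary curve joining $\varphi_t(g)$ and $\varphi_t(h)$ back by $\varphi_t^{-1}$, asserts ``the invariance of the metric'' to equate the two lengths, and takes the infimum. The entire mathematical content is thus concentrated in the unproved claim that $\varphi_t$ preserves the Riemannian metric tensor, i.e.\ that $(d\varphi_t)_g$ is a linear isometry at every $g$. That is precisely what you prove: you identify $\varphi_t$ with the conjugation $C_{a_t}=L_{a_t}\circ R_{a_t^{-1}}$, $a_t=\exp(tX)$, and deduce from the displayed $\Ad$-invariance of the Cartan--Killing form that the left invariant metric is in fact bi-invariant, so both factors (and hence $\varphi_t$) are isometries; your alternative computation of $(d\varphi_t)_g$ via $(d\varphi_t)_e=\Ad(\exp(tX))$ and the automorphism identity $\varphi_t\circ L_g=L_{\varphi_t(g)}\circ\varphi_t$ is an equally valid way to see the same thing. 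What the paper's route buys is brevity and the explicit passage from metric-preservation to distance-preservation (which you leave as the standard fact that a Riemannian isometry preserves the induced distance); what your route buys is the actual justification of the invariance being invoked. Combining the two would give a complete argument.
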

\begin{proof}
  For any $g, h \in G$ and any $t \in \R$ the Riemmanian distance between $\varphi_t(g)$ and $\varphi_t(h)$ is taken over all smooth curves $\sigma_t(s)$ such that 
	\[
	  \sigma_t(0) = \varphi_t(g) \ \ \mbox{and}\ \ \sigma_t(1) = \varphi_t(h). 
	\]	
	It is clear that smooth curves $\phi_t^{-1}\sigma_t(s)$ satisfy 
	\[
	  \varphi_t^{-1}\circ \sigma_t(0)=g\ \ \mbox{e}\ \ \varphi_t^{-1}\circ \sigma_t(1) = h.
	\]
	Using the invariance of metric $g$ we obtain 
	\[
	  \int_0^1 g( \sigma'(s), \sigma'(s) )^{1/2} ds  = \int_0^1 g((\varphi_t^{-1}\circ \sigma)'(s), (\varphi_t^{-1}\circ \sigma)'(s) )^{1/2} ds.
	\]
  Taking the infimun over all smooth curves joining $\varphi_t(g)$ and $\varphi_t(h)$ yields 
	\[
	  \rho(\varphi_t(g), \varphi_t(h)) =  \rho(g,h),
	\]
	which shows that $\varphi_t$ is an isometry on $G$ to the invariant metric given by Cartan-Killing form.
\end{proof} 

Before our next result, we need to introduce some notations. For $r>0$ we will denote an sphere of radius $r$ with center $g$ by $\mathbb{S}_r(g) = \{ x \in G: \rho(x,g) =r\}$ and an open ball of radius $r$ with center $g$ by  $ B_r(g) =\{ x \in G; \rho(x,g) < r\}$.

\begin{proposition}\label{sphere}
  If $G$ is a compact,semisimple Lie group, then for each $g \in G$ the linear flow $\varphi_t(g)$ is in an sphere. 
\end{proposition}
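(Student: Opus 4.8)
The plan is to combine two facts that are already in place: first, that the identity $e$ is a fixed point of the linear flow, i.e. $\varphi_t(e)=e$ for all $t$ (noted right after the expression $\varphi_t(g)=\exp(tX)\cdot g\cdot\exp(t(I_*X))$); and second, that $\varphi_t$ is an isometry of $G$ with respect to the Riemannian distance $\rho$ associated to the Cartan--Killing form (Proposition \ref{isometry}). Fix $g\in G$ and put $r=\rho(g,e)$.

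The key step is then a one-line computation: for every $t\in\R$, using that $\varphi_t$ preserves $\rho$ and fixes $e$,
\[
  \rho(\varphi_t(g),e)=\rho(\varphi_t(g),\varphi_t(e))=\rho(g,e)=r.
\]
Hence $\varphi_t(g)\in\mathbb{S}_r(e)=\{x\in G:\rho(x,e)=r\}$ for all $t$, which is exactly the assertion. So the whole orbit $\{\varphi_t(g):t\in\R\}$ is contained in a single sphere centered at $e$.

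I do not expect a real obstacle here, since the proposition is essentially a corollary of Proposition \ref{isometry} together with the fixed-point property of $e$. The only point worth a clarifying sentence is the choice of center: any fixed point $p$ of $\varphi_t$ serves equally well, giving $\varphi_t(g)\in\mathbb{S}_{\rho(g,p)}(p)$, and $e$ is always available; in the degenerate case where $g$ itself lies in the center of $G$ (hence is a fixed point), the ``sphere'' is just the point $g$ with radius $0$, still consistent with the statement. One may optionally remark that the orbit therefore lies in the intersection of all such spheres as $p$ ranges over $\fix(\varphi_t)$, but for the proposition a single $\mathbb{S}_r(e)$ suffices.
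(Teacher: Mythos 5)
Your proof is correct and is essentially identical to the paper's: both fix $r=\rho(g,e)$ and use that $\varphi_t$ is an isometry (Proposition \ref{isometry}) fixing $e$ to get $\rho(\varphi_t(g),e)=\rho(\varphi_t(g),\varphi_t(e))=\rho(g,e)=r$, so the orbit lies in $\mathbb{S}_r(e)$. Your added remark about other fixed points as centers is a harmless extra observation not present in the paper.
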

\begin{proof}
  We first choose an arbitrary point $g \in G$ and write $r=\rho(g,e)$. Then, 
	\[ 
	  \rho(\varphi_t(g),e) = \rho(\varphi_t(g), \varphi_t(e)) = \rho(g,e)=r, \, \,  \forall\,\, t.
	\]
	It means that $\varphi_t(g) \in \mathbb{S}_r$ for all $t$, and the proof is complete. 
\end{proof}

A first consequence of Propostion above is about $\omega$-limit and $\alpha$-limit sets.

\begin{corollary}
	If $G$ is a compact, semisimple Lie group, then $\omega$-limit and $\alpha$-limit sets of $g$ are in spheres.
\end{corollary}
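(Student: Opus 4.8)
The plan is to combine Proposition~\ref{sphere} with the standard definitions of $\omega$-limit and $\alpha$-limit sets and the fact that a sphere $\mathbb{S}_r(e)$ is a closed subset of $G$. First I would fix $g \in G$ and set $r = \rho(g,e)$. By Proposition~\ref{sphere}, the whole forward and backward orbit $\{\varphi_t(g) : t \in \R\}$ lies in the sphere $\mathbb{S}_r(e) = \{x \in G : \rho(x,e) = r\}$. Since $G$ is compact, $\mathbb{S}_r(e)$ is compact, hence in particular closed.

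Next I would recall that
\[
  \omega(g) = \bigcap_{T \geq 0} \cl\big(\{\varphi_t(g) : t \geq T\}\big),
  \qquad
  \alpha(g) = \bigcap_{T \leq 0} \cl\big(\{\varphi_t(g) : t \leq T\}\big).
\]
Each of the sets $\{\varphi_t(g) : t \geq T\}$ and $\{\varphi_t(g) : t \leq T\}$ is contained in $\mathbb{S}_r(e)$, and since $\mathbb{S}_r(e)$ is closed, so is the closure of each such set. Therefore both $\omega(g)$ and $\alpha(g)$, being intersections of subsets of $\mathbb{S}_r(e)$, are themselves contained in $\mathbb{S}_r(e)$. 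This gives the claim: the $\omega$-limit and $\alpha$-limit sets of any $g$ sit inside the sphere of radius $\rho(g,e)$ centered at $e$.

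The only point requiring a word of care — and the one I would regard as the ``main obstacle,'' though it is minor — is the closedness of the sphere $\mathbb{S}_r(e)$, which is needed so that limit points of orbit segments stay on the sphere. This follows because $\rho(\cdot,e)$ is continuous (the Riemannian distance to a fixed point is continuous) and $\mathbb{S}_r(e)$ is its level set $\rho(\cdot,e)^{-1}(\{r\})$, a preimage of a closed set. With that observation in place the argument is immediate, so I would keep the proof to two or three lines, citing Proposition~\ref{sphere} for the containment of the orbit in the sphere and the continuity of $\rho$ for the closedness.
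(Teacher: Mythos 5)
Your argument is correct and is exactly the one the paper intends: the paper states this corollary without proof, treating it as an immediate consequence of Proposition~\ref{sphere}, and your reasoning (orbit contained in the sphere $\mathbb{S}_r(e)$, limit sets contained in closures of orbit tails, sphere closed as a level set of the continuous function $\rho(\cdot,e)$) is the standard way to make that immediacy precise. No gaps.
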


We can now to prove our main result of our section.

\begin{theorem}\label{stable}
	Let $G$ be a compact, semisimple. Then, any fix point of linear flow $\varphi_t$ is an stable point. 
\end{theorem}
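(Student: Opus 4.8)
The plan is to use the isometry property from Proposition \ref{isometry} directly, since it immediately forces stability. Given a fix point $g$ of $\varphi_t$ and any $g$-neighborhood $U$, I would first choose $r > 0$ small enough that the open ball $B_r(g)$ is contained in $U$; such $r$ exists because the balls $B_r(g)$ form a neighborhood basis at $g$ in the Riemannian topology. Then I would set $V = B_r(g)$ itself and claim this is the required neighborhood witnessing stability.

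The verification is the short computational core. For any $x \in V = B_r(g)$ and any $t \geq 0$, since $\varphi_t$ is an isometry (Proposition \ref{isometry}) and $g$ is a fix point, we have
\[
  \rho(\varphi_t(x), g) = \rho(\varphi_t(x), \varphi_t(g)) = \rho(x, g) < r,
\]
so $\varphi_t(x) \in B_r(g) \subset U$. Hence $\varphi_t(V) \subset U$ for all $t \geq 0$, which is exactly the definition of $g$ being stable. In fact the argument shows the stronger statement that $\varphi_t(B_r(g)) = B_r(g)$ for every $t$, and one could remark that this is why no fix point can be asymptotically stable.

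I do not anticipate a genuine obstacle here: the heavy lifting has already been done in establishing that $\varphi_t$ is an isometry for the Cartan-Killing metric (which in turn rested on Proposition \ref{central} and the $\Ad$-invariance of the metric). The only point requiring a word of care is that the notion of stability in the definition is stated for arbitrary neighborhoods $U$, so I must reduce to metric balls at the outset; this is routine since the Cartan-Killing metric induces the manifold topology on the compact group $G$. One might also note explicitly that the result applies to every fix point, not just the identity, precisely because the isometry property is global and does not privilege $e$.
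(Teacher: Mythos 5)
Your proposal is correct and follows essentially the same route as the paper: both reduce to metric balls (since the Riemannian distance induces the topology), then use the isometry property of $\varphi_t$ from Proposition \ref{isometry} together with $\varphi_t(g)=g$ to conclude $\rho(\varphi_t(x),g)=\rho(x,g)<r$. Your added remark that $\varphi_t(B_r(g))=B_r(g)$ is a nice observation but does not change the argument.
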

\begin{proof}
	We begin by fixing an arbitrary fix point $g$ of $G$. We also remember that a Riemmanian distance induces the topology of Riemmanian manifold. So it is sufficient to consider as neighborhoods of $g$ open balls $B_r(g)$, where $r>0$ is arbitrary. Choose $r_0>0$ such that $r_0 \leq r$ and consider the ball $B_{r_0}(g)$. Taking any $y \in B_{r_0}(g)$ we see that 
	\[
	  \rho(\varphi_t(y),g) = \rho(y,g) < r_0 \leq r,
	\]
	where we used Proposition (\ref{isometry}) at first equality. It shows that $\varphi_t(B_{r_0}(g)) \subset B_{r}(g)$. Consequently, by definition, $g$ is a stable point to the linear flow $\varphi_t$.  
\end{proof}

Hereafter, we give a characterization of derivations on compact, semisimple Lie groups. Before we need to introduce some concepts. Following \cite{colonius}, if for an eigenvalue $\mu$ all complex Jordan blocks are one-dimensional, i.e., a complete set of eigenvectors exists, it is called semisimple. Equivalently, the corresponding real Jordan blocks are one-dimensional if $\mu$ is real, and two-dimensional if $\mu$, $\mu$ and $\bar{\mu} \in \mathbb{R}$. 

\begin{theorem}\label{derivation}
  On a compact, semisimple Lie group $G$, every derivation has only semisimple eigenvalues.
\end{theorem}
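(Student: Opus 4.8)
The plan is to exploit the $\Ad$-invariance of the Cartan--Killing metric established in the previous propositions, since that is exactly what forces the relevant operators to be diagonalizable. First I would recall that on a compact, semisimple Lie group $G$ every derivation $\DC$ of $\fg$ is inner, that is $\DC = \ad(X)$ for some $X \in \fg$; this is a standard fact (all derivations of a semisimple Lie algebra are inner), and it is in fact implicit in the paper's running setup, where a linear vector field is written $\XC = X + I_*X$ with $\DC = \ad(X)$. Thus it suffices to show that $\ad(X)$ is a semisimple endomorphism of $\fg$ for each $X \in \fg$.

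Next I would argue that $\ad(X)$ is skew-symmetric with respect to the inner product $\langle\,,\rangle = -B(\cdot,\cdot)$, where $B$ is the Cartan--Killing form. Indeed, differentiating the identity $\langle \Ad(\exp tX)Y, \Ad(\exp tX)Z\rangle = \langle Y,Z\rangle$ at $t=0$ gives $\langle \ad(X)Y, Z\rangle + \langle Y, \ad(X)Z\rangle = 0$. A skew-symmetric operator on a real inner product space is normal, hence complexifies to a normal (indeed skew-Hermitian) operator on $\fg_{\C}$, which by the spectral theorem is diagonalizable with purely imaginary eigenvalues. Equivalently, its real Jordan blocks are one-dimensional for the zero eigenvalue and two-dimensional for the nonzero (purely imaginary) eigenvalues — precisely the definition of semisimple eigenvalues recalled just before the statement. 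Therefore $\DC = \ad(X)$ has only semisimple eigenvalues, which is the assertion.

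The main obstacle — really the only nontrivial input — is the claim that every derivation of $\fg$ is inner. If one does not wish to invoke Whitehead's lemma / $H^1(\fg,\fg)=0$, one can instead restrict attention to the derivations that actually occur in this paper, namely those arising from linear vector fields, which are inner by construction (this is the decomposition $\XC = X + I_*X$ cited from \cite{sanmartin}); then the skew-symmetry argument above applies verbatim. Either way, once innerness is in hand the proof is a short computation: skew-symmetry of $\ad(X)$ relative to the Cartan--Killing metric, followed by the spectral theorem for normal operators. I would present it in that order: reduce to $\DC = \ad(X)$, differentiate the invariance identity to get skew-symmetry, and conclude diagonalizability over $\C$ with imaginary spectrum, hence semisimplicity of all eigenvalues.
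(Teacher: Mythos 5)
Your proof is correct, but it takes a genuinely different route from the paper's. The paper argues dynamically: it invokes Theorem \ref{stable} to get stability of the identity for the linear flow $\varphi_t$, transfers this through the local diffeomorphism $\exp$ (using $(d\varphi_t)_e = e^{t\DC}$) to stability of the origin for the linearized system $\dot{Y} = \DC Y$, combines this with Proposition \ref{central} (all eigenvalues have zero real part), and then cites the standard ODE stability criterion (Theorem 1.4.10 in \cite{colonius}) which says that stability of the origin forces eigenvalues on the imaginary axis to be semisimple. You instead argue algebraically: reduce to $\DC = \ad(X)$ (either by Whitehead's lemma for full generality, or by the paper's decomposition $\XC = X + I_*X$ for the derivations that actually arise), differentiate the $\Ad$-invariance of $-B$ to get skew-symmetry of $\ad(X)$, and apply the spectral theorem for normal operators. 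Both arguments ultimately rest on the same geometric input — the invariance of the negative Cartan--Killing inner product — but yours extracts diagonalizability directly and gets the purely imaginary spectrum (i.e.\ Proposition \ref{central}) as a free byproduct, without needing the stability machinery or the external citation; the paper's version, by contrast, is consistent with its stated aim of deriving algebraic facts from dynamical ones. One caveat on your side: the theorem as stated concerns \emph{every} derivation, so if you avoid Whitehead's lemma you prove a formally weaker statement (only inner derivations); the paper's proof has the analogous implicit dependence, since it needs every derivation to integrate to a linear flow, which on a semisimple group again comes down to innerness.
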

\begin{proof}
  Let $\DC$ be a derivation on $G$. From Theorem \ref{stable} we see that $e$ is a stable point of the linear flow $\varphi_t$ associated to $\DC$. Since $(d\varphi_t)_e = e^tD$, it follows that the linearization of $\dot{g} = \XC(g)$ is $X = \DC X$. Being $exp$ local diffeomorphism and $e$ stable, it follows that $0$ is stable. From Proposition \ref{central} we know that eigenvalues of $\DC$ has real part null. Then, Theorem 1.4.10 in \cite{colonius} assures that every eigenvalue of $\DC$ is semisimple, which gives the proof. 
\end{proof}

Theorem above is fundamental to study periodic orbits of linear flows. 

To end this section, we study the stability of periodic orbits to linear flow $\varphi_t$. We begin by recalling what means a stable periodic orbit. A periodic orbit $\Gamma$ of the linear flow $\varphi_t$ is stable if for each open set $V$ that contains $\Gamma$, there is an open set $W \subset V$ such that every solution, starting at a point in $W$ at $t = 0$, stays in $V$ for all $t \geq 0$.

Before to presents our next result, we need to introduce the following notation. Take $g \in G$ and  consider the orbit $\varphi_t(g)$.Write $Tube_r (\varphi_t(g)) = \{ h \in G: \rho(h,\varphi_t(g))<r$ for some $t \}$  for any $r>0$,.

\begin{proposition}
	Let $\XC$ be a linear vector field on a compact, semisimple Lie group $G$. If $h \in Tube_r(\varphi_t(g))$, then $\varphi_s(h) \in Tube_r(\varphi_t(g))$ for any $s \in \R$.
\end{proposition}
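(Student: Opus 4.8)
The plan is to exploit two facts already established: first, that the linear flow is a genuine one-parameter group of transformations, so $\varphi_s\circ\varphi_{t}=\varphi_{s+t}$ for all $s,t\in\R$ (and it is defined for all real times since $G$ is compact, hence the flow is complete); and second, that each $\varphi_s$ is an isometry of the Cartan--Killing Riemannian distance $\rho$, by Proposition \ref{isometry}. Everything then reduces to unwinding the definition of $Tube_r(\varphi_t(g))$.

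Concretely, I would start by assuming $h\in Tube_r(\varphi_t(g))$, which by definition means there exists some time $t_0\in\R$ with $\rho\big(h,\varphi_{t_0}(g)\big)<r$. Fix an arbitrary $s\in\R$ and apply the isometry $\varphi_s$ to both arguments: since $\rho$ is $\varphi_s$-invariant, one gets
\[
  \rho\big(\varphi_s(h),\varphi_s(\varphi_{t_0}(g))\big)=\rho\big(h,\varphi_{t_0}(g)\big)<r.
\]
Now use the flow property $\varphi_s(\varphi_{t_0}(g))=\varphi_{s+t_0}(g)$, so that the right-hand term is the point of the orbit $\{\varphi_t(g):t\in\R\}$ corresponding to time $t=s+t_0$. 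Hence $\rho\big(\varphi_s(h),\varphi_{s+t_0}(g)\big)<r$, which is exactly the statement that $\varphi_s(h)\in Tube_r(\varphi_t(g))$. Since $s$ was arbitrary, this completes the argument.

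I do not expect any real obstacle here: the only points that need care are making sure the flow is defined for all $s\in\R$ (immediate from compactness of $G$, as noted after Proposition \ref{central} and used throughout Section 3) and invoking Proposition \ref{isometry} correctly. One could optionally remark that the same computation shows $Tube_r(\varphi_t(g))$ is a $\varphi_s$-invariant set for every $s$, which is the content one actually wants for the subsequent discussion of stability of periodic orbits.
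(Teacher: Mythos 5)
Your proposal is correct and follows exactly the paper's own argument: take $t_0$ with $\rho(h,\varphi_{t_0}(g))<r$, apply the isometry property of Proposition \ref{isometry} together with the flow identity $\varphi_s\circ\varphi_{t_0}=\varphi_{s+t_0}$, and conclude $\rho(\varphi_s(h),\varphi_{s+t_0}(g))<r$. Your write-up is in fact slightly more careful than the paper's, which leaves the flow property implicit.
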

\begin{proof}
	Let $h \in Tube_r(\varphi_t(g))$. Then, for some $t$ we have $\rho( h, \varphi_t(g))< r$. From Proposition \ref{isometry} it follows that 
	\[
		\rho(\varphi_s(h), \varphi_{t+s}(g)) = \rho(h, \varphi_{t}(g)) <r,
	\]
	which implies that $\varphi_s(h) \in Tube_r(\varphi_t(g))$.
\end{proof}

\begin{theorem}\label{periodic}
	Let $\XC$ be a linear vector field on a compact, semisimple Lie group $G$. Then every periodic orbit is stable.
\end{theorem}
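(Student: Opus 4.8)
The plan is to mimic the proof of Theorem \ref{stable}, upgrading from balls around a fixed point to tubes around a periodic orbit, using that $\varphi_t$ is an isometry (Proposition \ref{isometry}). First I would fix a periodic orbit $\Gamma = \{\varphi_t(g): t \in \R\}$, which is compact since it is the continuous image of a circle (periodicity), and let $V$ be an arbitrary open set containing $\Gamma$. By compactness of $\Gamma$ and the fact that the Riemannian distance induces the topology, there is $r > 0$ such that the tube $Tube_r(\varphi_t(g))$ is contained in $V$; indeed for each point of $\Gamma$ some ball around it lies in $V$, and a finite subcover together with a Lebesgue-number argument yields a uniform $r$.

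Next I would take $W = Tube_{r}(\varphi_t(g))$ itself (or any smaller tube $Tube_{r_0}(\varphi_t(g))$ with $r_0 \le r$), which is open and contained in $V$. The key point is the preceding proposition: if $h \in Tube_r(\varphi_t(g))$ then $\varphi_s(h) \in Tube_r(\varphi_t(g))$ for all $s \in \R$. Hence any solution starting in $W$ at time $0$ remains in $Tube_r(\varphi_t(g)) \subset V$ for all $t \ge 0$ (in fact for all $t \in \R$). By the definition of stability of a periodic orbit recalled just before the theorem, this shows $\Gamma$ is stable.

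The only genuine step requiring care is producing the uniform radius $r$ with $Tube_r(\varphi_t(g)) \subset V$; the rest is immediate from the isometry property already established. I expect this to be the main (mild) obstacle: one must invoke compactness of $\Gamma$ — which is where periodicity is used — since for a non-closed orbit the analogous statement could fail. Once that is in hand, the proof is a two-line consequence of Proposition \ref{isometry} and the tube-invariance proposition, exactly parallel to how Theorem \ref{stable} followed from isometry invariance of balls.
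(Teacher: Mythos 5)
Your proof is correct and follows essentially the same route as the paper: produce a uniform radius $r$ so that $Tube_r(\varphi_t(g))$ lies inside $V$, then invoke the tube-invariance proposition together with Proposition \ref{isometry}. In fact your compactness/Lebesgue-number argument for obtaining the uniform $r$ is more careful than the paper's own wording, which takes an infimum of admissible radii (where a supremum, or simply any admissible $r$, is what is actually needed).
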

\begin{proof}
	Let $g \in G$ such that $\varphi_t(g)$ is a periodic orbit of linear flow $\varphi_t$. We consider a open set $V$ such that $\varphi_t(g) \subset V$. Take $r_0 = \inf \{ r : B_r(\varphi_t(g)) \subset U, \ \forall\,\, t\geq 0 \}$. Thus it is sufficient to take $U =Tube_{r_0}(\varphi_t(g))$ and to apply Proposition above. 
\end{proof}

\section{Periodic orbits}

In this section, we study periodic orbits of a linear flow in a compact, semisimple Lie group $G$. The key of our study is Theorem \ref{derivation} because it describes all eigenvalues of any derivation on $G$.

We begin by recalling that a linear vector field $\XC$ can be written as $\XC = X + I_*X$, where $X$ is a right invariant vector field, $I_{*}X$ is the left invariant vector field associated to $X$, and $I_*$ is the differential of inverse map $\mathfrak{i}(g) = g^{-1}$. In this way, we can rewrite the differential equation (\ref{odelinear}) as 
\[
  \dot{g} = X(g) + (I_*X)(g).
\]
It implies that there exists a relation between flows of linear dynamical system $\dot{g} = \XC(g)$ and of invariant one $\dot{g} = X(g)$. In fact, a direct accounts shows that, for all $g \in G$, $\varphi_t(g)$ is solution of (\ref{odelinear}) if, and only if, $\varphi_t(g)\cdot \exp(tX)$ is solution of $\dot{g} = X(g)$. It suggests us that there exists a relation between periodic orbits of invariant and linear system. Therefore, our next step is to investigate this fact.

\begin{proposition}\label{equivalentperiodic}
	Let $\XC$ be a linear vector field on a compact, semisimple Lie group $G$. The following sentences are equivalent:
	\begin{description}
		\item[i)] for every $g$, the invariant flow $\exp(tX)g$ is periodic;
		\item[ii)]$e$ is a periodic point of invariant flow $\exp(tX)$;
		\item[iii)] any $g \in G$ is a periodic point of linear flow $\varphi_t$;
		\item[iv)] for every $g$, $Ad(g)$ is a periodic point of the flow $e^{tD}$.
	\end{description} 
\end{proposition}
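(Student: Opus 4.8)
The plan is to treat i) $\Leftrightarrow$ ii) and the equivalence of iv) with the rest as the easy part, and to concentrate everything on iii) $\Rightarrow$ ii). I write $A := X(e)\in\fg$, so that $\DC=\ad(A)$ and, as in the proof of Proposition \ref{central}, $(d\varphi_t)_e = e^{t\DC}=\Ad(\exp(tA))$; recall $\varphi_t(g)=\exp(tA)\,g\,\exp(-tA)$ and that the invariant flow of $X$ is $g\mapsto\exp(tA)g$. Throughout ``periodic point'' means $\varphi_T(p)=p$ for some $T>0$, so that fixed points count as periodic.

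Since $\exp(tA)g$ is periodic exactly when $\exp(TA)=e$ for some $T>0$ --- a condition not involving $g$ --- both i) and ii) amount to ``$\exp(tA)$ is a periodic one-parameter subgroup'', so i) $\Leftrightarrow$ ii). Likewise ii) $\Rightarrow$ iii) is immediate: $\exp(TA)=e$ gives $\varphi_T(g)=\exp(TA)\,g\,\exp(-TA)=g$ for every $g$. For the statements involving iv), recall that $\Ad$ is a homomorphism intertwining $\varphi_t$ with the flow $M\mapsto e^{t\DC}Me^{-t\DC}$ on $GL(\fg)$ and that $\ker\Ad=Z(G)$ is finite ($G$ being compact semisimple). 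Thus iii) $\Rightarrow$ iv): $\varphi_{T_g}(g)=g$ gives $\Ad(g)=e^{T_g\DC}\Ad(g)e^{-T_g\DC}$. And iv) $\Rightarrow$ iii): $e^{T\DC}\Ad(g)e^{-T\DC}=\Ad(g)$ means $\Ad(\varphi_T(g))=\Ad(g)$, so $\varphi_T(g)=gz$ with $z\in Z(G)$, whence (as $z$ is central) $\varphi_{nT}(g)=gz^n$ and $\varphi_{mT}(g)=g$ once $z^m=e$. (Should ``the flow $e^{tD}$'' in iv) instead mean $M\mapsto e^{t\DC}M$, then iv) says $e^{T\DC}=\id$ for some $T>0$, which is at once equivalent to ii) via $\exp(TA)\in Z(G)$ finite; in either reading, iii) $\Rightarrow$ ii) is what is left to do.)

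For iii) $\Rightarrow$ ii) --- where compactness and semisimplicity really enter --- I linearize at points near the identity. For $B\in\fg$ and $s\neq 0$,
\[
  \varphi_t(\exp(sB)) = \exp\!\big(s\,\Ad(\exp tA)B\big) = \exp\!\big(s\,e^{t\DC}B\big),
\]
and since $\exp$ is injective near $0\in\fg$ and $\|e^{t\DC}B\|=\|\Ad(\exp tA)B\|=\|B\|$, for small $s$ the curve $t\mapsto\varphi_t(\exp(sB))$ is periodic iff $e^{T\DC}B=B$ for some $T>0$. Hence iii) yields: for every $B\in\fg$ there is $T_B>0$ with $e^{T_B\DC}B=B$. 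Because $\|e^{t\DC}v\|=\|v\|$ for all $t$ and $v$ (proof of Proposition \ref{central}), $\DC$ is skew for the Cartan-Killing metric, and being also semisimple (Theorem \ref{derivation}) it gives a decomposition $\fg=\fg^0\oplus V_1\oplus\cdots\oplus V_r$ on which $e^{t\DC}$ acts as the identity on $\fg^0$ and as a product of planar rotations through the angle $t\theta_j$ on $V_j$ (with distinct $\theta_j>0$). Picking one $B_0$ with nonzero projection onto every $V_j$, the relation $e^{T_{B_0}\DC}B_0=B_0$ forces $T_{B_0}\theta_j\in 2\pi\Z$ for all $j$ simultaneously, so $e^{T_{B_0}\DC}=\id$, i.e. $\Ad(\exp(T_{B_0}A))=\id$. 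Then $\exp(T_{B_0}A)\in Z(G)$, which is finite, so $\exp(mT_{B_0}A)=e$ for a suitable $m\geq 1$; this is ii).

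I expect the only genuine obstacle to be this last implication: one is handed a return time per point, and the move that makes it work is to read the whole recurrence off a single, generically chosen point near the identity, where the compact semisimple structure supplies everything --- semisimplicity and the purely imaginary spectrum of $\DC$ (Theorem \ref{derivation}, Proposition \ref{central}) turn $e^{t\DC}$ into a product of rotations; a vector hitting every eigenplane converts ``all orbits periodic'' into the simultaneous resonance $T\theta_j\in 2\pi\Z$; and the finiteness of $Z(G)$ lifts $e^{T\DC}=\id$ back to $\exp(TA)=e$. The one remaining point of care is to fix at the outset which $GL(\fg)$-flow the phrase ``the flow $e^{tD}$'' in iv) refers to; as observed, both natural choices are harmless.
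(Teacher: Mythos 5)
Your proposal is correct, and on the easy implications (i $\Leftrightarrow$ ii, ii $\Rightarrow$ iii, and the intertwining $\Ad(\exp(tX)g)=e^{t\DC}\Ad(g)$ behind iv) it runs exactly as the paper does. Where you genuinely diverge is on iii) $\Rightarrow$ ii): the paper disposes of this with ``it is clear that $\exp(tX)$ is periodic,'' which is not clear at all, since iii) hands you a separate period $T_g$ for each point and one must extract a single $T$ with $\exp(TX)=e$ (indeed $\varphi_{T_g}(g)=g$ only says $\exp(T_gX)$ commutes with $g$). You correctly isolate this as the one substantive implication and close it: linearizing at $\exp(sB)$ turns periodicity of nearby orbits into $e^{T_B\DC}B=B$ for every $B\in\fg$; the skew-symmetry of $\DC$ for the Cartan--Killing metric (from the proof of Proposition \ref{central}) gives the rotation normal form $\fg=\fg^0\oplus V_1\oplus\cdots\oplus V_r$; a vector with nonzero component in every $V_j$ forces the simultaneous resonance $T\theta_j\in 2\pi\Z$, hence $e^{T\DC}=\id$; and finiteness of $Z(G)=\ker\Ad$ lifts this to $\exp(mTX)=e$. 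The same finite-center step also repairs the paper's equally terse ``we thus get the equivalence'' for iv) $\Rightarrow$ i). So your route costs the spectral decomposition and the generic-vector trick, but it buys an actual proof of the two implications the paper leaves unargued; your side remark disambiguating which flow ``$e^{t\DC}$'' means in iv) is also warranted (the paper's own computation shows it intends left multiplication $M\mapsto e^{t\DC}M$, your second reading).
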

\begin{proof}
  i) $\Leftrightarrow$ ii)
	If $\exp(tX)g$ is periodic, then $e$ is a periodic point of the curve $\exp(tX)$. On contrary, suppose that $e$ is a periodic point of the flow $\exp(tX)$, that is, there is a $s>0$ such that $\exp((t+s) X) = \exp (tX)$. Then, for any $g \in G$
	\[
	  \exp((t+s)X)g = (\exp((t+s)X)\cdot e) \cdot g = \exp(tX)g.
	\]
  ii) $\Leftrightarrow$ iii)
  If $e$ is a periodic point of the flow $\exp(tX)$, then there is a $s>0$ such that $\exp(tX) = \exp((t+s)X)$. Thus
	\[
	  \varphi_{t+s}(g) = \exp((t+s)X)\cdot g \cdot \exp(-(t+s)X) = \exp(tX)\cdot g \cdot \exp(-tX) = \varphi_t(g).
	\]
	On contrary, if every $g$ is a periodic point of $\varphi_t$, then it is clear that $\exp(tX)$ is periodic.\\
	i) $\Leftrightarrow$ iv)
	Since $G$ is a semisimple Lie group, it follows 
	\[
	  \Ad(\exp(tX)\cdot g) = e^{t\ad(X)} \Ad(g) = e^{tD}\Ad(g).
	\]
	We thus get the equivalence. 
\end{proof}

The interest of the Proposition above is that periodic orbits of linear or invariant flows are equivalents on compact, semisimple Lie groups.

\begin{theorem}\label{teo1}
	Let $G$ be a compact, semisimple Lie group. It is equivalent
	\begin{description}
		\item[i)] if there exists a periodic orbit for the linear flow $\varphi_t$; or 
		\item[ii)] if there exists a periodic orbit for the right invariant flow $\exp(tX)$; 
		\item[iii)] the derivation $\DC$ of $\XC$ has only eigenvalues of the form $0$ or $\mu = \pm \alpha i$ with unique $\alpha \in \mathbb{R}^*$.
	\end{description}
	Furthermore, if there exists a periodic orbit, then its period is $T= 2\pi/\alpha$.
\end{theorem}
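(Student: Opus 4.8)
The plan is to prove the chain i) $\Leftrightarrow$ ii) $\Leftrightarrow$ iii) and then read off the period. The equivalence i) $\Leftrightarrow$ ii) is already handed to us: Proposition \ref{equivalentperiodic} shows that existence of a periodic orbit of $\varphi_t$ is equivalent to $\exp(tX)$ being periodic, so the real content is the equivalence of either of these with the spectral condition iii). By Proposition \ref{central} and Theorem \ref{derivation}, the derivation $\DC = \ad(X)$ on a compact semisimple $\fg$ has purely imaginary eigenvalues, all of them semisimple; hence $\DC$ is semisimple with spectrum $\{0, \pm\alpha_1 i, \dots, \pm\alpha_m i\}$ for some distinct $\alpha_j > 0$, and after complexifying $\fg$ we may choose a basis in which $e^{t\DC}$ acts by the scalars $e^{\pm i\alpha_j t}$ on the corresponding eigenspaces (and by $1$ on $\fg_0^{\mathbb C}$). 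The key reduction is that $\exp(tX)$ is periodic in $G$ if and only if the one-parameter group $e^{t\DC} = \Ad(\exp tX)$ is periodic in $\mathrm{GL}(\fg)$: one direction is immediate, and the converse uses that $G$ is semisimple so $\Ad$ has discrete kernel (the center), whence $\exp(t_0 X)$ central for the first return time $t_0$ forces $\exp(2t_0 X) = \exp(t_0 X)\cdot\exp(t_0 X)$, and iterating together with compactness of $\overline{\{\exp(tX)\}}$ (a torus) lets one conclude periodicity of $\exp(tX)$ itself; alternatively one invokes Proposition \ref{equivalentperiodic} iv) directly, which already phrases periodicity in terms of $e^{tD}$ acting on $\Ad(g)$.

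With that reduction, periodicity of $e^{t\DC}$ means there is $T>0$ with $e^{i\alpha_j T} = 1$ for every $j$, i.e. $\alpha_j T \in 2\pi\mathbb Z$ for all $j$. If there are two distinct frequencies $\alpha_j \ne \alpha_k$, the closure of $\{e^{t\DC}\}$ is a torus of rank $\geq 2$ and no common period exists unless $\alpha_j/\alpha_k \in \mathbb Q$; but even a rational ratio does not by itself contradict periodicity, so the sharp statement must be that iii) asks for a \emph{single} nonzero $\alpha$ — and what actually rules out two genuinely different eigenvalue-moduli is that the problem is asserting \emph{every} orbit is periodic (Proposition \ref{equivalentperiodic} i),iii) are universally quantified in $g$), so one needs a common period working simultaneously on all of $\fg$, and the cleanest sufficient-and-necessary form of "the flow $e^{t\DC}$ on $\fg$ is periodic" is that all the moduli coincide, giving a single $\alpha$. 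I would therefore argue: $e^{t\DC}$ periodic with period $T$ $\iff$ $\{t\DC \bmod 2\pi i\}$ returns to $0$ $\iff$ all nonzero eigenvalues lie in $i\,\frac{2\pi}{T}\mathbb Z$; taking $T$ minimal makes $\frac{2\pi}{T}$ the generator, and for the orbit through a generic $g$ to have exactly this period one needs the nonzero part of the spectrum to be exactly $\pm\frac{2\pi}{T} i = \pm\alpha i$. Conversely, if the spectrum is $\{0, \pm\alpha i\}$ then $e^{t\DC}$ is the identity at $t = 2\pi/\alpha$, so $\exp(tX)$ and $\varphi_t$ are periodic with that period. This simultaneously yields the final assertion $T = 2\pi/\alpha$.

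The main obstacle I anticipate is precisely pinning down why the uniqueness of $\alpha$ is forced rather than merely the weaker "all $\alpha_j$ rationally related" condition: the honest resolution is that "there exists a periodic orbit" in the statement is, via Proposition \ref{equivalentperiodic}, the same as "the flow on the whole group is periodic," and a linear flow $e^{t\DC}$ on a vector space is genuinely periodic (returns to the identity, not merely recurrent) exactly when its nonzero spectrum is a single $\pm\alpha i$-pair times nothing else — if there were $\alpha_1 \ne \alpha_2$ with $\alpha_1/\alpha_2 = p/q$ in lowest terms, the map $e^{t\DC}$ would still be periodic with period $2\pi q/\alpha_1$, so to genuinely exclude that case one must use semisimplicity more carefully, e.g. that in a compact semisimple $\fg$ the eigenvalues of $\ad(X)$ are $i\langle\lambda,\text{(coroot data)}\rangle$ forcing commensurability automatically and the "unique $\alpha$" is an artifact of $3$-dimensional examples. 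I would flag this: state the equivalence cleanly for the periodic-flow interpretation, prove $T = 2\pi/\alpha$ in that setting, and note that the hypothesis should be read as "every orbit is periodic with a common period," which is how the paper uses it downstream.
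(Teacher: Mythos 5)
Your route is the same as the paper's: i) $\Leftrightarrow$ ii) is quoted from Proposition \ref{equivalentperiodic}, and ii) $\Leftrightarrow$ iii) is reduced, via Proposition \ref{central} and Theorem \ref{derivation}, to deciding when the semisimple operator $e^{t\DC}$ with purely imaginary spectrum returns to the identity. The difference is that you stop, correctly, at the point the paper glosses over: from $e^{T\DC}=\mathrm{Id}$ one only gets $T\alpha_j\in 2\pi\Z$ for every nonzero frequency $\alpha_j$, i.e.\ that the $\alpha_j$ are pairwise commensurable, whereas the paper's proof asserts $T=2\pi/\alpha$ and $T=2\pi/\beta$ outright and concludes $\alpha=\beta$. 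That step is the real gap, and it is not repairable, because the implication ``periodic $\Rightarrow$ unique $\alpha$'' is false. Concretely, on $SO(4)$ take $X=\tfrac{3}{2}e_{13}+\tfrac{1}{2}e_{24}$ (in the paper's notation $b=3/2$, $e=1/2$, $a=c=d=f=0$): the nonzero eigenvalues of $\DC=\ad(X)$ are $\pm i$ and $\pm 2i$, yet $e^{2\pi\DC}=\mathrm{Id}$, $\exp(2\pi X)=-I_4$ is central, and $\exp(tX)$ is periodic of period $4\pi$; by Proposition \ref{equivalentperiodic} every orbit of $\varphi_t$ is then periodic as well, so i) and ii) hold while iii) fails. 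Your attempted rescue via the universal quantifier over $g$ does not work either: if $e^{t\DC}$ has period $T$ then \emph{all} orbits are simultaneously $T$- (or $2T$-) periodic, so demanding periodicity of every orbit buys nothing beyond commensurability.

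So you have not produced a proof of the theorem as stated because none exists; the honest conclusion, which your last paragraph reaches, is that the correct form of iii) is ``the nonzero eigenvalues are $\pm i\alpha_1,\dots,\pm i\alpha_m$ with all ratios $\alpha_j/\alpha_k$ rational,'' the period being a common multiple of the $2\pi/\alpha_j$ (possibly adjusted by the finite center, as in the example above). The clean case of a unique $\alpha$ with period exactly $2\pi/\alpha$ is what survives in dimension $3$ and is all that Corollary \ref{periodorbits3} needs; Theorem \ref{periodicSO(4)}, which rests on the uniqueness claim, is affected by the same counterexample. My only criticism of your write-up is that it hedges between the two readings instead of committing: state the equivalence with the commensurability condition, prove it by the spectral argument you already have, and record explicitly that the ``unique $\alpha$'' version fails for $SO(4)$.
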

\begin{proof}
	We first observe that i) is equivalent to  ii) by Proposition \ref{equivalentperiodic}. We are going to show that ii) is equivalent to iii). For this, it is sufficient to consider $e$ as a periodic point to the flow $\exp(tX)$ with period $T>0$. Then, for all $t \in \mathbb{R}$,
	\[
		\exp((t+T)X) = \exp(tX) \Leftrightarrow \exp(TX) = e \Leftrightarrow e^{T\DC} = Id.
	\]
	Take the Jordan form $J$ of $\DC$. A simple account shows that $e^{TJ} =Id$.  Since any eigenvalues of $\DC$ is semisimple, its real Jordan Block has dimension 1 or 2 if it is real or complex, respectively. If $0$ is eigenvalue of $\DC$, then its real Jordan block is written as $J_0 =[0]$. Thus $e^{tJ_0}$ is constant. It implies that in direction of $0$ the $e^tJ$ is constant. Consequently, solutions associated to $0$ are trivially periodic. Suppose that there are eigenvalues different of $0$. From Proposition \ref{central} they are of the form $\pm\alpha i$ and $\pm\beta i$. By Theorem \ref{derivation}, its real Jordan blocks are, respectively,  
	\[
		\left(
	  \begin{array}{cc}
			\cos(t\alpha) & -\sin(t\alpha) \\
			\sin(t\alpha) & \cos(t\alpha)
		\end{array}
		\right)
		 \ \ \mbox{and} \ \
		\left(
	  \begin{array}{cc}
			\cos(t\beta) & -\sin(t\beta) \\
			\sin(t\beta) & \cos(t\beta)
		\end{array}
		\right).
	\]
	As $e^{TJ} =Id$ we have $T = \frac{2\pi}{\alpha}$ and $T = \frac{2\pi}{\beta}$. This clearly forces $\alpha = \beta$. Consequently, eigenvalues different of $0$ are only $\alpha i$ and $-\alpha i$.
	
	Reciprocally, suppose that $\DC$ has only eigenvalues $0$  or $\pm\alpha i$. In the case of $0$ as discussed above the solution is constant. In the case of an eigenvalue $\pm \alpha i$ with $\alpha \neq 0$. Being $\pm \alpha i$ semisimple, every real Jordan block associated to it has the dimension two and the solution applied at this block gives the following matrix 
	\[
		\left(
	  \begin{array}{cc}
			\cos(t\alpha) & -\sin(t\alpha) \\
			\sin(t\alpha) & \cos(t\alpha)
		\end{array}
		\right).
	\]
	It is clear that matrix above is periodic with period $T= \frac{2 \pi}{\alpha}$. It implies that $e$ is periodic point of $e^{TJ}$ with period $T= \frac{2\pi}{\alpha}$, which is equivalent $Ad(e) = e$ to be periodic point of $e^{TD}$. Consequently, by Proposition \ref{equivalentperiodic}, the right invariant flow $\exp(tX)$ is periodic with period of $T= \frac{2 \pi}{\alpha}$.
\end{proof}

\begin{remark}
	The theorem above shows that all orbit that is not a fix point of a linear or invariant flows are periodic or not.
\end{remark}

\begin{corollary}\label{periodorbits3}
	Let $G$ be a compact, semisimple Lie group with dimension 3. Then 
	\begin{description}
		\item [i)] every orbit of some invariant flow $\exp(tX)$ is periodic;
		\item [ii)] every orbit of some linear flow $\varphi_t$ is periodic.
	\end{description}
\end{corollary}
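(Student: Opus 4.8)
The plan is to deduce Corollary \ref{periodorbits3} directly from Theorem \ref{teo1} together with the structure theory of $3$-dimensional compact semisimple Lie algebras. First I would recall that a $3$-dimensional real compact semisimple Lie algebra $\fg$ is simple (it is $\mathfrak{so}(3) \cong \mathfrak{su}(2)$), so it has no nontrivial ideals. Given a linear vector field $\XC$ on $G$ with associated derivation $\DC = \ad(X)$, the kernel of $\DC$ is the centralizer $\fz_{\fg}(X)$ of $X$ in $\fg$. The key dichotomy is whether $X = 0$ or $X \neq 0$.

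If $X = 0$, then $\XC = 0$ and every point of $G$ is a fixed point, so the orbits in question (those that are not fixed points) do not exist, and the statement is vacuously true; I would dispose of this trivial case in one sentence. If $X \neq 0$, then since $\fg$ is compact and semisimple, $\ad(X)$ is a semisimple endomorphism whose nonzero eigenvalues are purely imaginary (this is exactly Proposition \ref{central} applied here, or the classical fact that $\ad(X)$ is skew-symmetric for the Cartan--Killing metric). On a $3$-dimensional simple algebra, $\ad(X)$ has rank exactly $2$: its kernel is one-dimensional (spanned by $X$ itself, since the centralizer of a nonzero element in $\mathfrak{su}(2)$ is a maximal torus, hence one-dimensional), so the remaining two eigenvalues are $\pm \alpha i$ for a single $\alpha \in \R^*$. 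Thus the eigenvalues of $\DC$ are precisely $0$ and $\pm\alpha i$ with a unique $\alpha$, which is exactly condition iii) of Theorem \ref{teo1}. Hence a periodic orbit exists, and by the remark following Theorem \ref{teo1} every orbit that is not a fixed point is periodic; this gives both i) and ii) at once (they are equivalent by Proposition \ref{equivalentperiodic}).

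The only real content to verify is the eigenvalue count for $\ad(X)$ when $X \neq 0$, i.e. that the centralizer of a nonzero vector in a $3$-dimensional compact simple Lie algebra is exactly one-dimensional. I would argue this by noting that $\ker \ad(X)$ is a subalgebra containing $X$; if it were all of $\fg$ then $X$ would be central, contradicting semisimplicity; if it were two-dimensional, a two-dimensional subalgebra of $\mathfrak{su}(2)$ would be solvable, but every two-dimensional subalgebra sits inside the $\ad(X)$-invariant decomposition and one checks against the bracket relations that no such subalgebra survives — alternatively, use that a two-dimensional subalgebra would force $\ad(X)$ to have a repeated eigenvalue $0$ which, combined with semisimplicity of $\ad(X)$ and $\tr \ad(X) = 0$, would make $\ad(X) = 0$ again. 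So $\dim \ker \ad(X) = 1$ and the nonzero eigenvalues are a single conjugate pair $\pm\alpha i$. This step is the main (and essentially only) obstacle, and it is mild; everything else is an immediate invocation of Theorem \ref{teo1} and Proposition \ref{equivalentperiodic}.
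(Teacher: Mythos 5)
Your proof is correct and follows essentially the same route as the paper, which simply observes that in dimension $3$ the derivation $\DC=\ad(X)$ has eigenvalues $0$, $\alpha i$, $-\alpha i$ with $\alpha\in\R^*$ and then invokes Theorem \ref{teo1}. You additionally justify that observation (via the one-dimensionality of $\ker\ad(X)$ for $X\neq 0$, though the shorter count --- a real $3\times 3$ matrix with purely imaginary semisimple eigenvalues and trivial centralizer condition must have spectrum $\{0,\pm\alpha i\}$ --- suffices) and you handle the degenerate case $X=0$, both of which the paper leaves implicit.
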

\begin{proof}
	It is sufficient to observe that the derivation $\DC=\operatorname{ad}(X)$ has only eigenvalues $0$, $\alpha i$, and $ -\alpha i$ with $\alpha \in \R^*$. 
\end{proof}

An immediate consequence of Theorem  and is version of Poincar\'e-Bendixson's Theorem for compact, semisimple Lie groups.

\begin{theorem}[Poincar\'e - Bendixson]
	Let $G$ be a compact, semisimple Lie group. If a derivation $\DC$ has only eigenvalues $0$ or $\pm \alpha i$, if $\Omega$ is a nonempty compact $\omega$-limit set for the linear flow $\varphi_t$, and if $Ω$ does not contain a fix point point of $\varphi_t$, then $\Omega$ is a periodic orbit.
\end{theorem}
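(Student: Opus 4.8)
The plan is to use that, under the stated eigenvalue hypothesis, the linear flow $\varphi_t$ is \emph{globally periodic}; once that is in hand, every $\omega$-limit set is forced to coincide with a single orbit, and the Poincar\'e--Bendixson conclusion follows almost immediately.

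First I would dispose of the degenerate possibility that $0$ is the only eigenvalue of $\DC$: by Theorem \ref{derivation} it would then be semisimple, hence $\DC = 0$, so $\varphi_t = \id$ for all $t$ and every point of $G$ is fixed; then the nonempty set $\Omega$ would contain a fixed point, contrary to the hypothesis. So we may assume $\alpha \neq 0$ and that the values $\pm\alpha i$ actually occur among the eigenvalues of $\DC$. Then condition (iii) of Theorem \ref{teo1} holds, so by Theorem \ref{teo1} together with Proposition \ref{equivalentperiodic} every $g \in G$ is a periodic point of $\varphi_t$, all with the common period $T = 2\pi/\alpha$; in particular each orbit of $\varphi_t$ is either a fixed point or a topological circle traversed with period $T$.

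Finally, write $\Omega = \omega(x)$ for a point $x$ whose $\omega$-limit set is $\Omega$. Since $x$ is $T$-periodic, its forward orbit equals its full orbit $\mathcal{O}(x) = \{\varphi_t(x) : 0 \le t \le T\}$, which is a continuous image of a compact interval, hence compact and closed. Therefore $\omega(x) = \bigcap_{s \ge 0}\overline{\{\varphi_t(x): t \ge s\}} = \mathcal{O}(x)$, so $\Omega = \mathcal{O}(x)$. Because $\Omega$ contains no fixed point, $x$ is not a fixed point, so $\mathcal{O}(x)$ is a genuine periodic orbit, and hence $\Omega$ is a periodic orbit, as claimed.

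I do not expect a real obstacle: the substance is already carried by Theorem \ref{teo1} and Proposition \ref{equivalentperiodic}. The only points needing care are the degenerate case $\DC = 0$, the elementary fact that the $\omega$-limit set of a point lying on a periodic orbit is that orbit, and noticing that the assumption ``$\Omega$ contains no fixed point'' is exactly what prevents $x$ from being a fixed point (so that $\mathcal{O}(x)$ is a circle rather than a point).
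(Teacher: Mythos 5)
Your proposal is correct and follows exactly the route the paper intends: the paper gives no written proof at all, merely asserting the theorem is ``an immediate consequence'' of Theorem \ref{teo1}, and your argument is a complete and careful fleshing-out of that claim (global periodicity with common period $T=2\pi/\alpha$ via Theorem \ref{teo1} and Proposition \ref{equivalentperiodic}, hence $\omega(x)=\mathcal{O}(x)$, which is a periodic orbit since the no-fixed-point hypothesis rules out $x$ being fixed). Your explicit treatment of the degenerate case $\DC=0$ is a worthwhile addition that the paper's one-line justification silently skips.
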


\section{Applications}

In this section, our wish is to study the periodic orbits on compact, semisimple Lie groups of lower dimension. In fact, we are interested to describe linear flows, that are periodic orbits, on $SO(3)$ and $SU(2)$, and we are interested to classify the periodic orbits on $SO(4)$. 

\subsection{Linear flows on $SO(3)$ and $SU(2)$}

Our first case is to study  the linear flow on stability on the orthogonal group 
\[
  SO(3) = \{g \in \R^{3\times 3}: gg^{T} = 1, \, \det g = 1 \}.
\] 
It is well known that its Lie algebra is 
\begin{eqnarray*}
  \mathfrak{so}(3) & = &
	\left\{ \left[
	\begin{array}{ccc}
		0  & -z & y \\
		z  & 0  & -x \\
		-y & x  & 0
	\end{array}
	\right]:
	x,\, y, \, z \in \R
	\right\}.
\end{eqnarray*}

Let $\XC$ be a linear vector field on $SO(3)$. Then, there exists a right invariant vector field $X$ such that $\XC = X + I_*X$. A direct calculus shows that eigenvalues of $\DC = \ad(X)$ are 
\[
  \left\{0,-\sqrt{-x^2-y^2-z^2}, \sqrt{-x^2-y^2-z^2}\right\}.
\]

Write $\lambda_1 = -\sqrt{-x^2-y^2-z^2}$ and $\lambda_2 = \sqrt{-x^2-y^2-z^2}$. Then, using functional calculus we obtain 
\[
  \exp(tX) = \frac{\cosh(t \lambda_1)-1}{ \lambda_1^2}X^2+ \frac{\sinh(t\lambda_1)}{\lambda_1}X + Id.
\]
From this, it is possible to give the solution of the linear flow $\varphi_t$ on $SO(3)$.

\begin{proposition}
  Let $\XC$ be a linear vector field on $SO(3)$. The solution of linear flow $\varphi_t(g)$ associated to $\XC$ is  
	\[
    \left(\frac{\cosh(t \lambda_1)-1}{ \lambda_1^2}X^2+ \frac{\sinh(t\lambda_1)}{\lambda_1}X + Id\right) \cdot g \cdot \left(\frac{\cosh(t \lambda_2)-1}{ \lambda_2^2}X^2+ \frac{\sinh(t\lambda_2)}{\lambda_2}X + Id\right),
	\]
	where $X$ is the right invariant vector field associated to $\XC$ and $\lambda_1 = -\sqrt{-x^2-y^2-z^2}$ and $\lambda_2 = \sqrt{-x^2-y^2-z^2}$.
\end{proposition}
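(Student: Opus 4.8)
The plan is to reduce the statement to two explicit $3\times 3$ matrix exponentials in $SO(3)$. By Section~3, on a compact, semisimple Lie group — hence on $SO(3)$ — the linear flow factors as $\varphi_t(g) = \exp(tX)\cdot g\cdot\exp(t(I_*X))$, where $\XC = X + I_*X$ with $X$ the right invariant generator and $I_*X$ the associated left invariant field. So it suffices to write the one-parameter subgroups $\exp(tX)$ and $\exp(t(I_*X))$ of $SO(3)$ as matrices and then multiply the three factors.

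First I would treat $\exp(tX)$, which is exactly the functional-calculus computation recorded just before the statement. Viewing $X$ as the skew-symmetric matrix with entries $x,y,z$, its characteristic polynomial is $\xi(\xi^{2}+x^{2}+y^{2}+z^{2})$, so Cayley--Hamilton gives $X^{3} = -(x^{2}+y^{2}+z^{2})X = \lambda_1^{2}X$, and hence $\exp(tX) = \sum_n t^{n}X^{n}/n!$ is a combination $c_0(t)Id + c_1(t)X + c_2(t)X^{2}$. For $(x,y,z)\neq 0$ the eigenvalues $0,\lambda_1,\lambda_2$ of $X$ are distinct, so the $c_i$ are pinned down by the scalar interpolation conditions $c_0 + c_1\mu + c_2\mu^{2} = e^{t\mu}$ for $\mu\in\{0,\lambda_1,\lambda_2\}$; solving and using $\lambda_1+\lambda_2 = 0$ yields
\[
  \exp(tX) = \frac{\cosh(t\lambda_1)-1}{\lambda_1^{2}}X^{2} + \frac{\sinh(t\lambda_1)}{\lambda_1}X + Id .
\]
(Since $\lambda_1^{2} = \lambda_2^{2} = -(x^{2}+y^{2}+z^{2})\le 0$, these are the coefficients of Rodrigues' rotation formula written with hyperbolic functions of an imaginary argument, and the degenerate case $(x,y,z)=0$ follows by continuity.) Next I would treat $\exp(t(I_*X))$: since $I_*$ is the differential of inversion, $(I_*X)(e) = -X(e)$, so $\exp(t(I_*X)) = \exp(-tX)$; equivalently its generator has nonzero eigenvalues $-\lambda_1 = \lambda_2$ and $-\lambda_2 = \lambda_1$. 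Running the same functional calculus on $I_*X$ — or replacing $t$ by $-t$ above and using that $z\mapsto\sinh(z)/z$ and $z\mapsto(\cosh z-1)/z^{2}$ are even, together with $\lambda_2 = -\lambda_1$ — gives
\[
  \exp(t(I_*X)) = \frac{\cosh(t\lambda_2)-1}{\lambda_2^{2}}X^{2} + \frac{\sinh(t\lambda_2)}{\lambda_2}X + Id ,
\]
with $X$ in this factor standing for the generator $I_*X$. Substituting the last two displays into $\varphi_t(g) = \exp(tX)\cdot g\cdot\exp(t(I_*X))$ produces the asserted product.

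The main obstacle here is not conceptual but one of bookkeeping: keeping the correspondence $X\leftrightarrow I_*X = -X$ (equivalently $\lambda_1\leftrightarrow\lambda_2$) straight between the left and right factors and tracking the sign of the linear term, together with recalling that identifying the derivation $\DC = \ad(X)$ with the matrix $X$ is legitimate precisely because $\mathfrak{so}(3)$ is three-dimensional and simple, so its adjoint representation is equivalent to the defining one — which is why the eigenvalues of $\DC$ that enter the functional calculus are those of the matrix $X$ itself.
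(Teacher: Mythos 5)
Your argument is correct and follows the only route the paper itself takes (the paper offers no proof beyond displaying the functional--calculus formula for $\exp(tX)$ and invoking $\varphi_t(g)=\exp(tX)\cdot g\cdot\exp(t(I_*X))$); your Cayley--Hamilton/interpolation derivation of $\exp(tX)=\frac{\cosh(t\lambda_1)-1}{\lambda_1^2}X^2+\frac{\sinh(t\lambda_1)}{\lambda_1}X+Id$ and your remark that $\ad(X)$ and the matrix $X$ share eigenvalues on $\mathfrak{so}(3)$ are exactly the details the paper leaves implicit. The one substantive point is the parenthetical you slip in at the end of the second factor: since $z\mapsto(\cosh z-1)/z^2$ and $z\mapsto\sinh(z)/z$ are even and $\lambda_2=-\lambda_1$, the right-hand factor \emph{as printed}, with $X$ denoting the same matrix as in the left factor, is identically equal to the left factor, i.e.\ to $\exp(tX)$ rather than to $\exp(-tX)$; at $g=e$ the displayed product would then be $\exp(2tX)\neq e$, contradicting the fact that $e$ is a fixed point. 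So reading ``$X$'' in the second factor as the generator $I_*X=-X$ (equivalently, flipping the sign of the linear term) is not optional bookkeeping but a necessary correction to the statement, and it would be worth saying so explicitly rather than in an aside.
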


Corollary \ref{periodorbits3} now assures the characterization of periodic orbits of the linear flow $\varphi_t$.

\begin{proposition}
	On $SO(3)$, 
\begin{description}
		\item [i)] every orbit of some invariant flow $\exp(tX)$ is periodic;
		\item [ii)] every orbit of some linear flow $\varphi_t$ is periodic.
	\end{description}
	Furthermore, the period is $T = 2\pi /\sqrt{x^2+y^2+z^2}$. 
\end{proposition}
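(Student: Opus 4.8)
The plan is to apply Corollary \ref{periodorbits3} directly, once we have checked that $SO(3)$ satisfies its hypotheses. Since $SO(3)$ is a real, connected, compact, semisimple Lie group of dimension $3$, parts i) and ii) follow immediately from that corollary: every orbit of a linear flow $\varphi_t$ or an invariant flow $\exp(tX)$ is periodic, because the derivation $\DC = \ad(X)$ on a $3$-dimensional compact semisimple Lie algebra has eigenvalues $0$, $\alpha i$, $-\alpha i$ with $\alpha \in \R^*$ (or all three equal to $0$, the trivial case $X=0$).

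The only remaining point is to pin down the period, and this is where the explicit computation recorded just above the statement comes in. From the description of $\mathfrak{so}(3)$, a right invariant vector field $X$ corresponds to a matrix with parameters $x,y,z$, and the eigenvalues of $\DC=\ad(X)$ were computed to be $0$ and $\pm\sqrt{-x^2-y^2-z^2} = \pm i\sqrt{x^2+y^2+z^2}$. Thus in the notation of Theorem \ref{teo1} we have $\alpha = \sqrt{x^2+y^2+z^2}$, and the theorem gives the period $T = 2\pi/\alpha = 2\pi/\sqrt{x^2+y^2+z^2}$. For completeness one can cross-check this against the closed-form solution: $\lambda_1 = -i\sqrt{x^2+y^2+z^2}$, so $\cosh(t\lambda_1) = \cos(t\sqrt{x^2+y^2+z^2})$ and $\sinh(t\lambda_1)/\lambda_1 = \sin(t\sqrt{x^2+y^2+z^2})/\sqrt{x^2+y^2+z^2}$, and both are $2\pi/\sqrt{x^2+y^2+z^2}$-periodic; the same value governs the right-hand factor with $\lambda_2$.

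I expect no real obstacle here, since this is essentially an instantiation of the general theory to a specific low-dimensional group. The one thing worth stating carefully is the degenerate case $x=y=z=0$, i.e. $X=0$: then $\DC=0$, every point is a fixed point, every orbit is trivially periodic, and the period formula is vacuous; one may either exclude this case or note that the statement holds trivially. A short proof follows.

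\begin{proof}
  Since $SO(3)$ is a real, connected, compact, semisimple Lie group of dimension $3$, Corollary \ref{periodorbits3} applies and yields i) and ii) at once. For the period, let $X$ be the right invariant vector field associated to $\XC$, with matrix parameters $x,y,z$ as above. The eigenvalues of $\DC=\ad(X)$ are $0$ and $\pm\sqrt{-x^2-y^2-z^2}=\pm i\sqrt{x^2+y^2+z^2}$, so in the notation of Theorem \ref{teo1} we have $\alpha=\sqrt{x^2+y^2+z^2}$ (assuming $X\neq 0$; otherwise every orbit is a fixed point). By the last assertion of Theorem \ref{teo1}, every periodic orbit has period $T=2\pi/\alpha=2\pi/\sqrt{x^2+y^2+z^2}$.
\end{proof}
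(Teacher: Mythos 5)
Your proposal is correct and follows exactly the route the paper takes: the paper gives no separate proof, simply invoking Corollary \ref{periodorbits3} together with the eigenvalue computation $\{0,\pm\sqrt{-x^2-y^2-z^2}\}$ done just before the statement, and reading off the period from Theorem \ref{teo1}. Your explicit handling of the degenerate case $X=0$ and the cross-check against the closed-form solution are minor additions beyond what the paper records, but the argument is the same.
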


Our other case is the unitary group $SU(2)$, which is a matrix group given by
\[
  \mathrm{SU}(2) = \{ g \in \C^{2\times 2} : gg^T = 1, \,\mathrm{det} g = 1\}.
\]
The Lie algebra associated to $SU(2)$ is described as
\[
  \mathfrak{su}(2) =  
	  \left\{ \left[
	  \begin{array}{cc}
		  \frac{i}{2}x   & \frac{1}{2}(iz+y) \\
		  \frac{1}{2}(iz-y) & -\frac{1}{2}x \\
		\end{array}
		\right]:
		x,\, y, \, z \in \R
		\right\}.
\]

Let $\XC$ be a linear vector field on $SU(2)$ and $X$ the right invariant vector field associated to it. In analogous way to case $SO(3)$, it is easily to see that eigenvalues of a derivation $\DC = \ad(X)$ are 
\[
  \left\{0,-\sqrt{-x^2-y^2-z^2}, \sqrt{-x^2-y^2-z^2}\right\}.
\]
In consequence,

\begin{proposition}
	On $SU(2)$, 
	\begin{description}
		\item [i)] every orbit of some invariant flow $\exp(tX)$ is periodic;
		\item [ii)] every orbit of some linear flow $\varphi_t$ is periodic.
	\end{description}
	Furthermore, the period is $T = 2\pi /\sqrt{x^2+y^2+z^2}$. 
\end{proposition}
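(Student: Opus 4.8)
The plan is to deduce the statement directly from the general theory of Section 4, observing that $SU(2)$ is a real, connected, compact Lie group whose Lie algebra $\mathfrak{su}(2)$ is simple, hence $SU(2)$ is semisimple, and $\dim SU(2) = 3$. Consequently both Corollary \ref{periodorbits3} and Theorem \ref{teo1} are available without any modification.

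First I would record what was already computed just above: the derivation $\DC = \ad(X)$ associated to $\XC = X + I_*X$ has eigenvalue set $\{0,\, -\sqrt{-x^2-y^2-z^2},\, \sqrt{-x^2-y^2-z^2}\}$. Rewriting $\sqrt{-x^2-y^2-z^2} = i\sqrt{x^2+y^2+z^2}$ and putting $\alpha := \sqrt{x^2+y^2+z^2}$, the eigenvalues of $\DC$ are exactly $0$ and $\pm\alpha i$. For $X \neq 0$ one has $\alpha \in \R^*$, so the hypothesis of Corollary \ref{periodorbits3} holds and that corollary yields at once both (i) and (ii): every orbit of the invariant flow $\exp(tX)$ and every orbit of the linear flow $\varphi_t$ is periodic. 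The degenerate case $X = 0$ is handled separately: then the flows are constant and every orbit reduces to a fixed point.

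Finally, for the period I would invoke the last clause of Theorem \ref{teo1}, which asserts that once the nonzero eigenvalues of $\DC$ are $\pm\alpha i$ for a single $\alpha \in \R^*$, every periodic orbit has period $T = 2\pi/\alpha$; substituting $\alpha = \sqrt{x^2+y^2+z^2}$ gives $T = 2\pi/\sqrt{x^2+y^2+z^2}$, as claimed. I do not expect a genuine obstacle here: the only points needing a word of care are confirming that $SU(2)$ meets the standing hypotheses of Section 4 (compactness, and semisimplicity coming from the simplicity of $\mathfrak{su}(2)$) and isolating the trivial case $X = 0$. Everything else is a direct citation of results already proved, exactly paralleling the treatment of the corresponding statement for $SO(3)$.
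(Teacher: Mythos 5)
Your proposal is correct and follows essentially the same route as the paper: the paper also reads off the eigenvalues $0$ and $\pm i\sqrt{x^2+y^2+z^2}$ of $\DC=\ad(X)$ and then cites Corollary \ref{periodorbits3} together with the period formula from Theorem \ref{teo1}. Your explicit remarks on the semisimplicity of $\mathfrak{su}(2)$ and on the degenerate case $X=0$ are small refinements the paper leaves implicit.
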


\subsection{periodic orbits on $SO(4)$}

In this subsection, our wish is to give a condition for orbits of invariant or linear flow be periodic or not on $SO(4)$. Let $\mathfrak{so}(4)$ be the Lie algebra of $SO(4)$ given by
\[
	\left\{ \left[
		\begin{array}{cccc}
		0 & -x & -y & -z\\
		x & 0  & -u & -v \\
		y & u  &  0 & -w \\
		z & v  &  w &  0 
	\end{array}
	\right]:
	x,\, y,\, z, \, u, \, v, \, w \in \R
	\right\}.
\]
Consider the basis $\beta$ for $\mathfrak{so}(4)$ that consists of $4 \times 4$ matrices $e_{12},e_{13},e_{14},e_{23},e_{24},e_{34}$ that have $1$ in the $(i,j)$ entry, $-1$ in the $(j,i)$ entry, and $0$ elsewhere ($1\leq i < j \leq 4$). A computation of Lie brackets gives
\[
	[e_{12},e_{13}] = e_{23},\, [e_{12},e_{14}] = e_{24},\, [e_{12},e_{23}] = -e_{13},\, [e_{12},e_{24}] = -e_{14},\, [e_{12},e_{34}] = 0, 
\]
\[
	[e_{13},e_{14}] = e_{34},\,[e_{13},e_{23}] = e_{12},\, [e_{13},e_{24}] = 0,\, [e_{13},e_{34}] = -e_{14},\, [e_{14},e_{23}] = 0,
\]
\[
	[e_{14},e_{24}] = e_{12},\, [e_{14},e_{34}] = e_{13},\, [e_{23},e_{24}] = e_{34}\, [e_{23},e_{34}] = - e_{24},\, [e_{24},e_{34}] = e_{23}.
\]
Let $\XC$ be a linear vector field on $SO(4)$. Let us denote by $\DC = \ad(X)$ the associated derivation to $\XC$, where $X$ is an right invariant vector field on $SO(4)$. Our next step is to describe the derivation $\DC$. To do this, write
\[
	X = a e_{12} +b e_{13} + c e_{14} + d e_{23} + e e_{24} + f e_{34}, \ \ a,b,c,d,e,f \in \R.
\]
By Lie brackets above, we compute  
\[
	\DC = \ad(X) =
	\left(
	\begin{array}{cccccc}
		0 & -d & -e & b & c & 0 \\
		d & 0 & -f & -a & 0 & c \\
		e & f & 0 & 0 & -a & -b \\
		-b & a & 0 & 0 & -f & e \\
		-c & 0 & a & f & 0 & -d \\
		0 & -c & b & -e & d & 0
	\end{array}
	\right).
\]
Some calculus show that eigenvalues of $\DC = \ad(X)$ are 
\[
	 \{ 0 ,\pm \sqrt{-(a+f)^2-(b-e)^2-(c+d)^2}, \pm \sqrt{-(a+f)^2-(b+e)^2-(c-d)^2}\}.
\]
We observe that eigenvalues are according to Theorem \ref{derivation}. We now are in position to give a condition that characterize periodic orbits of an invariant or linear flow. 

\begin{theorem}\label{periodicSO(4)}
	Let $\XC$ be a linear vector field on $SO(4)$. Consider the derivation $\DC = \ad(X)$ of $\XC$, where $X$ is a right invariant vector field such that  
	\[
		X = a e_{12} +b e_{13} + c e_{14} + d e_{23} + e e_{24} + f e_{34}, \ \ a,b,c,d,e,f \in \R.
	\]
	A necessary and sufficient condition to every orbit that is not a fix point of linear flow $\varphi_t$ or invariant flow $\exp(tX)$ be periodic is that $bc=ed$. Furthermore, if there are periodic orbits, then their periods are $T = 2\pi /\sqrt{(a+f)^2+(b+e)^2+(c-d)^2}$. 
\end{theorem}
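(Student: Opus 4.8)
The plan is to reduce the claim to an application of Theorem \ref{teo1}, which says that a linear (equivalently invariant) flow on a compact semisimple Lie group has all non-fixed orbits periodic if and only if the derivation $\DC$ has eigenvalues only $0$ and $\pm\alpha i$ for a single $\alpha\in\R^*$. So the entire content is to analyze when the two purely imaginary pairs exhibited for $\mathfrak{so}(4)$, namely $\pm\sqrt{-(a+f)^2-(b-e)^2-(c+d)^2}$ and $\pm\sqrt{-(a+f)^2-(b+e)^2-(c-d)^2}$, coincide. Writing $\alpha^2 = (a+f)^2+(b-e)^2+(c+d)^2$ and $\beta^2 = (a+f)^2+(b+e)^2+(c-d)^2$ (both nonnegative), the two pairs agree precisely when $\alpha^2=\beta^2$.

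First I would set up the equality $\alpha^2=\beta^2$ and expand: $(b-e)^2+(c+d)^2 = (b+e)^2+(c-d)^2$. Expanding both sides, the $b^2,e^2,c^2,d^2$ terms cancel, leaving $-2be+2cd = 2be-2cd$, i.e. $4cd = 4be$, i.e. $bc = ed$ (matching the stated condition up to the obvious relabeling $be=cd \Leftrightarrow bc=ed$ — I should double-check the index bookkeeping here, since the theorem as stated writes $bc=ed$ while the natural algebra gives $be=cd$; I would present it consistently with whichever the Lie-bracket computation actually produces). Thus $bc=ed$ is exactly the condition that the two imaginary pairs merge into a single pair $\pm\alpha i$ with $\alpha = \sqrt{(a+f)^2+(b+e)^2+(c-d)^2}$ (and one notes that under $be=cd$ this equals $\sqrt{(a+f)^2+(b-e)^2+(c+d)^2}$ as well, so the value of $\alpha$ is unambiguous).

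Next I would handle the degenerate sub-cases carefully, since Theorem \ref{teo1} demands $\alpha\neq 0$: if $bc=ed$ and $\alpha=0$, then $\DC$ is nilpotent, but a derivation on a compact semisimple Lie group is semisimple (Theorem \ref{derivation}), hence $\DC=0$, so $X$ lies in the center of $\mathfrak{so}(4)$ — but $\mathfrak{so}(4)$ is semisimple with trivial center, forcing $X=0$ and $\XC=0$, the trivial flow where every point is fixed; this is the excluded case "orbit that is not a fix point" so the statement is vacuously fine. Conversely, if $bc\neq ed$ then $\alpha^2\neq\beta^2$ and the derivation genuinely has two distinct imaginary pairs (again using that $0$ always appears but $\alpha,\beta$ cannot both vanish unless they're equal), so by Theorem \ref{teo1} not every non-fixed orbit is periodic — indeed one gets quasi-periodic orbits unless $\alpha/\beta\in\Q$, but for the "every orbit periodic" conclusion the mere distinctness already kills it. Finally the period: when $bc=ed$ the single pair is $\pm\alpha i$, and Theorem \ref{teo1} gives period $T=2\pi/\alpha = 2\pi/\sqrt{(a+f)^2+(b+e)^2+(c-d)^2}$, which is the asserted formula.

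The main obstacle — really the only non-routine point — is verifying that the eigenvalue list for $\DC$ quoted before the theorem is correct, i.e. that the characteristic polynomial of that $6\times 6$ matrix factors as $\lambda^2(\lambda^2+\alpha^2)(\lambda^2+\beta^2)$; the cleanest route is to exploit the splitting $\mathfrak{so}(4)\cong\mathfrak{su}(2)\oplus\mathfrak{su}(2)$, under which $\ad(X)$ block-diagonalizes into two $3\times 3$ blocks each of $\mathfrak{so}(3)$-type, whose nonzero eigenvalues are $\pm i$ times the norm of the corresponding self-dual / anti-self-dual part of $X$ — those norms being exactly $\alpha$ and $\beta$. I would either cite this standard decomposition or note that the eigenvalue computation was already asserted in the excerpt and simply invoke it, so that the proof proper is just the short algebraic identity $(b-e)^2+(c+d)^2=(b+e)^2+(c-d)^2 \iff bc=ed$ together with the appeal to Theorems \ref{derivation} and \ref{teo1}.
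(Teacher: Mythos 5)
Your proposal takes essentially the same route as the paper: the paper's entire proof is the observation that, by Theorem \ref{teo1}, all non-fixed orbits are periodic iff the two radicands $(a+f)^2+(b-e)^2+(c+d)^2$ and $(a+f)^2+(b+e)^2+(c-d)^2$ coincide, which it then asserts is equivalent to $bc=ed$. Your expansion is the correct one: cancelling the squared terms leaves $-2be+2cd=2be-2cd$, i.e.\ $be=cd$, not $bc=ed$, and since these two conditions are genuinely inequivalent (e.g.\ $b=c=1$, $d=e=2$ satisfies $be=cd$ but not $bc=ed$), the discrepancy you flagged is a real error in the paper's final step rather than a harmless relabelling. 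Your handling of the degenerate case $\alpha=0$ (ruled out because a nilpotent semisimple derivation is zero and $\mathfrak{so}(4)$ has trivial center) is a point the paper omits entirely, and your suggestion to verify the eigenvalue list via the splitting of $\mathfrak{so}(4)$ into self-dual and anti-self-dual parts is worth pursuing: that decomposition pairs $(a+f,\,b-e,\,c+d)$ with $(a-f,\,b+e,\,c-d)$, and a direct check with $X=e_{12}+e_{34}$ gives eigenvalues $0$ (multiplicity four) and $\pm 2i$, contradicting the printed formula; the second radical should read $(a-f)^2$ in place of $(a+f)^2$, which changes the periodicity condition yet again, to $af+cd=be$. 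So your argument is the paper's argument carried out correctly, and it exposes that both the eigenvalue list and the stated condition $bc=ed$ need repair.
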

\begin{proof}. 
	It is a simple application of Theorem \ref{teo1}. In fact, orbits of linear flow $\varphi_t$ or invariant flow $\exp(tX)$ are periodic if, and only if, eigenvalues of $\DC=\ad(X)$ are $0$ or $\pm \alpha i$ for a unique $\alpha \in \R$. Last condition is equivalent to 
	\[
		\sqrt{-(a+f)^2-(b-e)^2-(c+d)^2} = \sqrt{-(a+f)^2-(b+e)^2-(c-d)^2},
	\]
	which is equivalent to $bc = ed$. 
\end{proof}

As direct application of Theorem above, any right invariant vector field $e_{12}$, $e_{13}$, $e_{14}$, $e_{23}$, $e_{24}$ or $e_{34}$ yields periodic orbits for linear or invariant flows.

{\small
}

\end{document}